\spnewtheorem{assumption}{Assumption}{\bf}{\it}
\journalname{Journal of Optimization Theory and Applications}
\begin{document}

\title{A Framework of Conjugate Direction Methods for \\ Symmetric Linear Systems in Optimization\thanks{This author thanks the Italian national research program `RITMARE', by {\em CNR-INSEAN, National Research Council-Maritime Research Centre}, for the support received.}
}

\titlerunning{A Framework of Conjugate Direction Methods}        

\author{Fasano Giovanni}

\authorrunning{Fasano Giovanni} 

\institute{Fasano Giovanni \at
              Department of Management, University Ca'Foscari of Venice \\
              Tel.: +39-041234-6922\\
              Fax: +39-041234-7444\\
              \email{fasano@unive.it} \\
             \emph{Present address:} S.Giobbe, Cannaregio 873, 30121 Venice, ITALY
}

\date{Received: date / Accepted: date}

\maketitle

\begin{abstract}
In this paper we introduce a parameter dependent class of Krylov-based methods, namely $CD$, for the solution of
symmetric linear systems. We give evidence that in our proposal we generate sequences of conjugate directions, extending some properties of the standard Conjugate Gradient (CG) method, in order to preserve the conjugacy. For specific values of the parameters in our framework we obtain schemes equivalent to both the CG and the scaled-CG. We also prove the finite convergence of the algorithms in $CD$, and we provide some error analysis. Finally, preconditioning is introduced for $CD$, and we show that standard error bounds for the preconditioned CG also hold for the preconditioned $CD$.
\keywords{Krylov-based Methods \and Conjugate Direction Methods \and Conjugacy Loss and Error Analysis \and  Preconditioning}
\subclass{90C30 \and 90C06 \and 65K05 \and 49M15}
\end{abstract}

\section{Introduction}
\label{sec_intro_0}
The solution of symmetric linear systems arises in a wide range of real applications \cite{A96,GV89,S03}, and has been carefully issued in the last 50 years, due to the increasing demand of fast and reliable solvers. {\em Illconditioning} and {\em large number of unknowns} are among the most challenging issues which may harmfully affect the solution of linear systems, in several frameworks where either structured or unstructured coefficient matrices are considered \cite{A96,H96,SVdV00}.

The latter facts have required the introduction of a considerable number of techniques, specifically aimed at tackling classes of linear systems with appointed pathologies \cite{SVdV00,GS92}. We remark that the structure of the coefficient matrix may be essential for the success of the solution methods, both in numerical analysis and optimization contexts. As an example, PDEs and PDE-constrained optimization provide two specific frameworks, where sequences of linear systems often claim for specialized and robust methods, in order to give reliable solutions.

In this paper we focus on iterative Krylov-based methods for the solution of symmetric linear systems, arising in both numerical analysis and optimization contexts. The theory detailed in the paper is not limited to consider large scale linear systems; however, since Krylov-based methods have proved their efficiency when the scale is large, without loss of generality we will implicitly assume the latter fact.

The accurate study and assessment of methods for the solution of linear systems is naturally expected from the community  of people working on numerical analysis. That is due to their expertise and great sensibility
to theoretical issues, rather than to practical algorithms implementation or software developments. This has raised a consistent literature, including manuals and textbooks, where the analysis of solution techniques for linear systems has become a keynote subject, and where essential achievements have given strong guidelines to theoreticians and practitioners from optimization \cite{H96}.

We address here a parameter dependent class of CG-based methods, which can equivalently reduce to the CG for a suitable choice of the parameters. We firmly claim that our proposal is not primarily intended to provide an efficient alternative to the CG. On the contrary, we mainly detail a general framework of iterative methods, inspired by polarity for quadratic hypersurfaces, and based on the generation of conjugate directions. The algorithms in our class, thanks to the parameters in the scheme, may possibly keep under control the conjugacy loss among directions, which is often caused by finite precision in the computation. The paper is not intended to report also a significant numerical experience. Indeed, we think that there are not yet clear rules on the parameters of our proposal, for assessing efficient algorithms. Similarly, we have not currently indications that methods in our proposal can outperform the CG. On this guideline, in a separate paper we will carry on selective numerical tests, considering both symmetric linear systems from numerical analysis and optimization. We further prove that preconditioning can be introduced for the class of methods we propose, as a natural extension of the preconditioned CG (see also \cite{GV89}).

As regards the symbols used in this paper, we indicate with $\lambda_m(A)$ and $\lambda_M(A)$ the
smallest/largest eigenvalue of the positive definite matrix $A$;
moreover $\|v\|^2_{A}= v^TAv$, where $A$ is a positive definite
real matrix. $R(A)$ is the {\em range} of matrix $A$ and $A^+$ is the Moore-Penrose pseudoinverse of matrix $A$. With $Pr_{C}(v)$ we represent the orthogonal projection of vector $v\in \mathbb{R}$ onto the convex set $C \subseteq \mathbb{R}$. Finally, the symbol ${\cal K}_i(b,A)$ indicates the Krylov subspace ${\rm span}\{b,Ab,A^2b, \dots, A^ib\}$ of dimension $i+1$. All the other symbols in the paper follow a standard
notation.

Sect.~\ref{sec2A}  briefly reviews both the CG and the Lanczos process, as Krylov-subspace methods, in order to highlight promising aspects to investigate in our proposal. Sect.~\ref{sec:2b} details some relevant applications of conjugate directions in optimization frameworks, motivating our interest for possible extensions of the CG. In Sects.~ \ref{sec_prop} and \ref{sec:further_prop} we describe our class of methods and some related properties. In Sects.~ \ref{sec:relaz_CG-CG_2step} and \ref{sec:scaled} we show that the CG and the scaled-CG may be equivalently obtained as particular members of our class. Then, Sects.~ \ref{sec:matr_fact} and \ref{sec:conjugacy_loss} contain further properties of the class of methods we propose. Finally, Sect.~\ref{sec:prec_CG_2step} analyzes the preconditioned version of our proposal, and a section of Conclusions completes the paper, including some numerical results.

\section{The CG Method and the Lanczos Process}
\label{sec2A}
In this section we comment the method in Table \ref{tab_CG}, and we focus on the relation between the CG and the Lanczos process, as Krylov-subspace methods. In particular, the Lanczos process namely does not generate conjugate directions; however, though our proposal relies on generalizing the CG, it shares some aspects with the Lanczos iteration, too.
\\
As we said, the CG is commonly used to iteratively solving the linear system
\begin{table}
\caption{The CG algorithm for solving (\ref{system}).}
\begin{center}
\fbox{$\!\!\!\!\!$
\begin{tabular}{l}
    \   \\
\multicolumn{1}{c}{\bf The Conjugate Gradient (CG) method}  \\
    \   \\
{\bf Step $0$:} $\quad\!$ Set $k=0$, \ $y_0 \in \mathbb{R}$, \ $r_{0}:= b-A y_0$.  \\
\hspace*{1cm} $\quad$ If $r_{0}=0$, then STOP. Else, set $p_0 := r_0$; \ $k=k+1$.   \\
\hspace*{1cm} $\quad$ Set $p_{-1}=0$ and $\beta_{-1}=0$.  \\
{\bf Step $k$:} $\quad\!$ Compute $\alpha_{k-1}:= r_{k-1}^{T}p_{k-1}/p_{k-1}^{T}Ap_{k-1}$,   \\
\hspace*{1.0cm} $\quad$ $y_{k}:= y_{k-1} + \alpha_{k-1}p_{k-1}$, \ $r_{k}:= r_{k-1} - \alpha_{k-1}Ap_{k-1}$. \\ \hspace*{1.0cm} $\quad$ If $r_{k}=0$, then STOP. Else, set \\
\hspace*{1.0cm} $\quad$ \ \ -- \ $\beta_{k-1}:= \|r_{k}\|^2/\|r_{k-1}\|^2$, $p_{k}:= r_{k} + \beta_{k-1}p_{k-1}$      \\
\hspace*{1.0cm} $\quad$ \ \ -- \ (or equivalently set $p_{k}:= -\alpha_{k-1}Ap_{k-1} + (1+ \beta_{k-1})p_{k-1} -\beta_{k-2}p_{k-2}$)   \\
\hspace*{1.0cm} $\quad$ Set $k = k+1$, go to {\bf Step $k$}.
\end{tabular}
$\!\!\!\!\!$}
\end{center}
\label{tab_CG}
\end{table}
%
%
\begin{equation}
   Ay = b,     \label{system}
\end{equation}
where $A \in \mathbb{R}^{n \times n}$ is symmetric {\em positive
definite} and $b \in \mathbb{R}^n$. Observe that the CG is quite often
applied to a {\em preconditioned} version of the linear system
(\ref{system}), i.e. ${\cal M}Ay={\cal M}b$, where ${\cal M} \succ
0$ is the preconditioner \cite{G97}. Though the theory for the CG
requires $A$ to be positive definite, in several practical
applications it is successfully used when $A$ is indefinite, too
\cite{H80,Nash2000}. At Step $k$ the CG generates the pair
of vectors $r_k$ ({\em residual}) and $p_k$ ({\em search
direction}) such that \cite{GV89}
\begin{eqnarray}
        & & {\rm orthogonality \ \ property:}  \qquad \ r_i^Tr_j = 0,  \qquad\qquad \ \ \ 0 \leq i \not = j \leq k,   \label{rela_1a}  \\
       & & {\rm conjugacy \ \ property:}  \quad\qquad \ \, p_i^TAp_j = 0, \qquad\qquad 0 \leq i \not = j \leq k.  \label{rela_2a}
\end{eqnarray}
Moreover, finite convergence holds, i.e. $Ay_h=b$ for some $h \leq
n$. Relations (\ref{rela_1a}) yield the Ritz-Galerkin condition
$r_{k} \perp {\cal K}_{k-1}(r_0,A)$, where
    $$ {\cal K}_{k-1}(r_0,A) := {\rm span}\{b,Ab,A^2b, \ldots, A^{k-1}b\} \equiv {\rm span}\{r_0, \ldots, r_{k-1}\}.$$
Furthermore, the direction $p_{k}$ is computed at Step $k$
imposing the conjugacy condition $p_{k}^TAp_{k-1}=0$. It can be
easily proved that the latter equality implicitly satisfies
relations (\ref{rela_2a}), with $p_0, \ldots, p_k$ linearly
independent. We remark that on practical problems, due to {\em
finite precision} and {\em roundoff} in the computation of the
sequences $\{p_k\}$ and $\{r_k\}$, when $|i-j|$ is large relations
(\ref{rela_1a})-(\ref{rela_2a}) may fail. Thus, in the practical
implementation of the CG some theoretical properties may not be
satisfied, and in particular when $|i-j|$ increases the conjugacy properties
(\ref{rela_2a}) may progressively be lost. As detailed in
\cite{CGT00,F05,GLL89,MN00}  the latter fact may have dramatic consequences also in
optimization frameworks (see also Sect.~\ref{sec:2b} for details). To our purposes
we note that in Table \ref{tab_CG}, at Step $k$
of the CG, the direction $p_k$ is usually computed as
\begin{equation}
     p_k := r_k +\beta_{k-1}p_{k-1},         \label{eq:standard_CG}
\end{equation}
but an equivalent expression is (see also Theorem 5.4 in \cite{HS52})
\begin{equation}
     p_k := -\alpha_{k-1}Ap_{k-1} +(1+\beta_{k-1})p_{k-1} - \beta_{k-2}p_{k-2}, \label{eq:altern_CG}
\end{equation}
which we would like to generalize in our proposal. Note also that in exact arithmetics the property (\ref{rela_2a}) is iteratively fulfilled by both (\ref{eq:standard_CG}) and (\ref{eq:altern_CG}).

The Lanczos process (and its preconditioned version) is another
Krylov-based method, widely used to tridiagonalize the matrix $A$ in (\ref{system}). Unlike
the CG method, here the matrix $A$ may be possibly indefinite, and
the overall method is slightly more expensive than the CG, since further computation is necessary to solve the resulting tridiagonal system.
Similarly to the CG, the Lanczos process generates at Step $k$ the
sequence $\{u_k\}$ ({\em Lanczos vectors}) which satisfies
    $$ {\rm orthogonality \ \ property:} \qquad u_i^Tu_j = 0,  \qquad 0 \leq i \not = j \leq k, $$
and yields finite convergence in at most $n$ steps. However, unlike the CG the Lanczos process is not explicitly inspired by polarity, in order to generate the orthogonal vectors. We recall that the CG and the Lanczos process are
3-term recurrence methods, in other words, for $k \geq 1$
    $$ \begin{array}{l}
        p_{k+1} \in {\rm span} \{Ap_k, p_k, p_{k-1}\}, \qquad {\rm for \ the \ CG}   \\
            \    \\
        u_{k+1} \in {\rm span} \{Au_k, u_k, u_{k-1}\}, \qquad {\rm for \ the \ Lanczos \ process}.
       \end{array} $$
When $A$ is positive definite, a full theoretical correspondence
between the sequence $\{r_k\}$ of the CG and the sequence $\{u_k\}$ of the Lanczos process may be fruitfully used in optimization problems (see also \cite{CGT00,F07,S83}),
being
    $$ u_k= s_k \frac{r_k}{\|r_k\|}, \qquad \qquad s_k \in \{-1,+1\}. $$

The class $CD$ proposed in this paper provides a framework, which encompasses the CG and to some extent resembles the
Lanczos iteration, since a 3-term recurrence is exploited. In particular, the $CD$ generates both
conjugate directions (as the CG) and orthogonal residuals (as the
CG and the Lanczos process). Moreover, similarly to the CG, the $CD$ yields a
3-term recurrence with respect to conjugate directions. As we remarked,
our proposal draws its inspiration from the idea of possibly attenuating the conjugacy loss of the CG, which may occur in (\ref{rela_2a}) when $|i-j|$ is large.

\section{Conjugate Directions for Optimization Frameworks}
\label{sec:2b} Optimization frameworks offer plenty of symmetric
linear systems where CG-based methods are often specifically preferable
with respect to other solvers. Here we justify this
statement by briefly describing the potential use of conjugate
directions within truncated Newton schemes. The latter methods
strongly prove their efficiency when applied to large scale
problems, where they rely on the proper computation of search
directions, as well as truncation rules (see \cite{NS90}).
\\
As regards the computation of search directions, suppose at the
outer iteration $h$ of the truncated scheme we perform $m$ steps of the CG, in order to compute the approximate solution $d_h^m$ to the linear system (Newton's equation)
    $$ \nabla^2f(z_h) d = -\nabla f(z_h).$$
When $z_h$ is close enough to the solution $z^\ast$ (minimum point) then possibly $\nabla^2 f(z_h) \succ 0$.  Thus, the conjugate directions
$p_1, \ldots, p_m$ and the coefficients $\alpha_1, \ldots,
\alpha_m$ are generated as in Table \ref{tab_CG}, so that the
following vectors can be formed
\begin{equation}
    \begin{array}{lcl}
        \displaystyle d_h^m = \sum_{i=1}^m \alpha_i p_i,  & &  \\
        \displaystyle d_h^P = \sum_{i \in I_h^P} \alpha_i p_i, & \  &
        I_h^P = \left\{i \in \{1, \ldots, m\} : \ \ p_i^T \nabla^2 f(z_h)p_i > 0 \right\}, \\
        \displaystyle d_h^N = \sum_{i \in I_h^N} \alpha_i p_i, & \  &
        I_h^N = \left\{i \in \{1, \ldots, m\} : \ \ p_i^T \nabla^2 f(z_h)p_i < 0 \right\}, \\
        \displaystyle s_h= \frac{p_\ell}{\|r_\ell\|}, & \  &
        \displaystyle \ell = {\arg\min}_{i \in \{1, \ldots,m\}} \left\{ \frac{p_i^T \nabla^2 f(z_h)p_i}{\|r_i\|^2}: \ \
        p_i^T \nabla^2 f(z_h)p_i < 0 \right\}.
    \end{array}   \label{directions}
\end{equation}
Observe that $d_h^m$ approximates in some sense Newton's direction at the outer
iteration $h$, and as described in \cite{F05,GLL89,FaRo07,GLRT00}
the vectors $d_h^m$, $d_h^P$ and $d_h^N$ can be used/combined to
provide fruitful search directions to the optimization framework. Moreover, $d_h^N$ and $s_h$ are suitably used/combined
to compute a so called {\em negative curvature direction}
`$s_h^m$', which can possibly force second order convergence for the
overall truncated optimization scheme (see \cite{FaRo07} for details). The
conjugacy property is essential for computing the vectors
(\ref{directions}). i.e. to design efficient truncated Newton
methods. Thus, introducing CG-based schemes which deflate conjugacy loss might be of great importance.

On the other hand, at the outer iteration $h$ effective truncation
rules typically attempt to assess the parameter $m$ in (\ref{directions}), as
described in \cite{NS90,FaLu09,NW00}. I.e., they monitor the
decrease of the quadratic local model
    $$ Q_h(d_h^m) := f(z_h) + \nabla f(z_h)^T (d_h^m) + \frac{1}{2}(d_h^m)^T \nabla^2 f(z_h) (d_h^m) $$
when $\nabla^2f(z_h) \succ 0$, so that the parameter $m$ is chosen to satisfy some conditions, including
    $$ \frac{Q_h(d_h^m) - Q_h(d_h^{m-1})}{Q_h(d_h^m)/m} \leq \alpha, \qquad {\rm for \ some \ }\alpha \in \ ]0,1[.$$
Thus, again the correctness of conjugacy properties among the directions $p_1,
\ldots, p_m$, generated while solving Newton's equation, may be
essential both for an accurate solution of Newton's equation (which is a linear system) and to the overall efficiency of the truncated optimization method.

\section{Our Proposal: the $CD$ Class}
\label{sec_prop}
Before introducing our proposal for a new general framework of CG-based algorithms, we consider here some additional  motivations for using the CG.
The careful use of the latter theory is in our opinion a launching pad for possible extensions of the CG. On this guideline, recalling the contents in Sect.~\ref{sec:2b}, now we summarize some critical aspects of the CG:
\begin{enumerate}
    \item the CG works iteratively and at any iteration the overall computational effort is only $O(n^2)$ (since the CG is a Krylov-subspace method);
    \item the conjugate directions generated by the CG are linearly independent, so that at most $n$ iterations are necessary to address the solution;
    \item the current conjugate direction $p_{k+1}$ is computed by simply imposing the conjugacy with respect to the direction $p_k$ (computed) in the previous iteration. This automatically yields that $p_{k+1}^TAp_i = 0$, for any $i \leq k$, too.
\end{enumerate}
As a matter of fact, for the design of possible general frameworks including CG-based methods, the items 1. and 2. are essential in order to respectively control the {\em computational effort} and ensure the {\em finite convergence}.

On the other hand, altering the item 3. might be harmless for the overall iterative process, and might possibly yield some fruitful generalizations. That is indeed the case of our proposal, where the item 3. is modified with respect to the CG. The latter modification depends on a parameter which is user/problem-dependent, and may be set in order to further compensate or correct the conjugacy loss among directions, due to roundoff and finite precision.

We sketch in Table~\ref{tab_CG-2step} our new
CG-based class of algorithms, namely $CD$.
\begin{table}
\caption{The parameter dependent class $CD$ of CG-based algorithms for solving (\ref{system}).}
\begin{center}
\fbox{
\begin{tabular}{l}
    \   \\
\multicolumn{1}{c}{\bf The $CD$ class}  \\
    \   \\
{\bf Step $0$:} $\quad\!$ Set $k=0$, $y_0 \in \mathbb{R}^n$, $r_0:=b-Ay_0$, $\gamma_0 \in \mathbb{R}\setminus\{0\}$. \\
\hspace*{1cm} $\quad$ If $r_0 = 0$, then STOP. Else, set $p_0 := r_0$, $k=k+1$.  \\
\hspace*{1cm} $\quad$ Compute $a_0:=r_0^Tp_0 / p_0^TAp_0$, \\
\hspace*{1cm} $\quad$ $y_1 := y_0 + a_0 p_0$, \ $r_1 := r_0 - a_0 Ap_0$.  \\
\hspace*{1cm} $\quad$ If $r_1 = 0$, then STOP. Else, set $\sigma_0:= \gamma_0\|Ap_0\|^2 / p_0^TAp_0$, \\
\hspace*{1cm} $\quad$ $p_1 := \gamma_0Ap_0 -\sigma_0p_0$, \  $k = k+1$. \\
{\bf Step $k$:}  $\quad\!$ Compute $a_{k-1}:= r_{k-1}^Tp_{k-1} / p_{k-1}^TAp_{k-1}$, \\
\hspace*{1cm} $\quad$ $y_k := y_{k-1} + a_{k-1} p_{k-1}$, \ $r_k := r_{k-1} - a_{k-1}Ap_{k-1}$. \\
\hspace*{1cm} $\quad$ If $r_k = 0$, then STOP. Else, set $\sigma_{k-1}:= \gamma_{k-1}\frac{\|Ap_{k-1}\|^2}{p_{k-1}^TAp_{k-1}}$, \\
\hspace*{1cm} $\quad$ $\omega_{k-1}:=\gamma_{k-1}\frac{(Ap_{k-1})^TAp_{k-2}}{p_{k-2}^TAp_{k-2}} = \frac{\gamma_{k-1}}{\gamma_{k-2}} \frac{p_{k-1}^TAp_{k-1}}{p_{k-2}^TAp_{k-2}}$, \ $\gamma_{k-1} \in \mathbb{R}\setminus\{0\}$  \\
\hspace*{1cm} $\quad$ $p_k := \gamma_{k-1}Ap_{k-1} - \sigma_{k-1}p_{k-1} - \omega_{k-1}p_{k-2}$, \ $k = k+1$. \\
\hspace*{1cm} $\quad$ Go to {\bf Step $k$}.
\end{tabular}
}
\end{center}
\label{tab_CG-2step}
\end{table}
The computation of the direction $p_k$ at Step $k$ reveals the main
difference between the CG and $CD$. In
particular, in Table~\ref{tab_CG-2step} the pair of coefficients
$\sigma_{k-1}$ and $\omega_{k-1}$ is computed so that {\em explicitly}\footnote{A further generalization might be obtained computing $\sigma_{k-1}$ and $\omega_{k-1}$ so that
\begin{equation}
       \left\{ \begin{array}{l}
                    p_k^TA(\gamma_{k-1}Ap_{k-1} - \sigma_{k-1}p_{k-1}) \ = \ 0 \\
                        \       \\
                    p_k^TAp_{k-2} \ = \ 0.
                \end{array} \right. \label{two_star_alternative}
\end{equation}
}
\begin{equation}
                \begin{array}{l}
                    p_k^TAp_{k-1} \ = \ 0   \\
                     \       \\
                    p_k^TAp_{k-2} \ = \ 0,
                \end{array}  \label{two_star}
\end{equation}
i.e. in Cartesian coordinates the conjugacy between the direction $p_k$ and both the
directions $p_{k-1}$ and $p_{k-2}$ is directly imposed, as specified by (\ref{rela_2a}). As detailed in
Sect.~\ref{sec2A}, imposing the double condition (\ref{two_star}) allows to possibly recover the conjugacy loss in the sequence $\{p_i\}$.

On the other hand, the residual $r_k$ at Step $k$ of Table \ref{tab_CG-2step} is computed by imposing the orthogonality condition $r_k^Tp_{k-1}=0$, as in the standard CG. The resulting method is {\em evidently a bit more expensive than the CG}, requiring one additional inner product per step, as long as an additional scalar to compute and an additional $n$-vector to store.  From Table \ref{tab_CG-2step} it is also evident that $CD$ provides a 3-term recurrence
with respect to the conjugate directions.

In addition, observe that the residual $r_k$ is computed at Step $k$ of $CD$ only to check for the stopping condition, and is not directly involved in the computation of $p_k$. Hereafter in this section we briefly summarize the basic properties of the class $CD$.
\begin{assumption}
\label{assu_0} The matrix $A$ in (\ref{system}) is symmetric
positive definite. Moreover, the sequence $\{\gamma_k\}$ in Table \ref{tab_CG-2step} is such that $\gamma_k \not = 0$, for any $k \geq 0$.
\end{assumption}

Note that as for the CG, the Assumption \ref{assu_0} is required
for theoretical reasons. However, the $CD$ class may in principle
be used also in several cases when $A$ is indefinite, provided that $p_k^TAp_k \neq 0$, for any $k \geq 0$.

\begin{lemma}
\label{lemma0} Let Assumption~\ref{assu_0} hold. At Step $k$ of
the {\rm $CD$} class, with $k \geq 0$, we have
\begin{equation}
    Ap_j \ \in \ {\rm span} \left\{p_{j+1},p_j,p_{\max\{0,j-1\}} \right\}, \ \ \ \ \ j \leq k.  \label{four_star}
\end{equation}
\end{lemma}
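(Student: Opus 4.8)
The plan is to read the relation (\ref{four_star}) directly off the direction recurrences that define the class in Table~\ref{tab_CG-2step}, by solving each recurrence for the term $Ap_j$. No induction is required: the membership is a purely local consequence of how $p_{j+1}$ is built, and the only structural input needed is the nonvanishing of the parameters $\gamma_k$ guaranteed by Assumption~\ref{assu_0}.

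First I would dispose of the base case $j=0$. Step~$0$ of the class sets $p_1 = \gamma_0 Ap_0 - \sigma_0 p_0$ with $\gamma_0 \neq 0$, so dividing by $\gamma_0$ yields $Ap_0 = \gamma_0^{-1} p_1 + (\sigma_0/\gamma_0)\, p_0 \in \mathrm{span}\{p_1,p_0\}$. Since $\max\{0,-1\}=0$, the claimed right-hand side $\mathrm{span}\{p_1,p_0,p_{\max\{0,-1\}}\}$ collapses to $\mathrm{span}\{p_1,p_0\}$, so (\ref{four_star}) holds for $j=0$.

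For a generic index $1 \le j \le k$, the direction produced at Step~$j+1$ reads $p_{j+1} = \gamma_j Ap_j - \sigma_j p_j - \omega_j p_{j-1}$. Again $\gamma_j \neq 0$ by Assumption~\ref{assu_0}, so I can isolate $Ap_j = \gamma_j^{-1}\left( p_{j+1} + \sigma_j p_j + \omega_j p_{j-1} \right)$, which lies in $\mathrm{span}\{p_{j+1},p_j,p_{j-1}\}$. As $\max\{0,j-1\}=j-1$ for every $j \ge 1$, this is precisely the asserted membership (\ref{four_star}).

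The argument is essentially a one-line rearrangement of the defining three-term recurrence, so there is no genuine obstacle, only bookkeeping. The points demanding care are: (i) invoking Assumption~\ref{assu_0} to legitimize the division by $\gamma_j$, without which the inversion would be meaningless; (ii) keeping the base case $j=0$ separate, because there the recurrence carries no $\omega$-term and the index $\max\{0,j-1\}$ coincides with $j$ rather than $j-1$; and (iii) observing that the relation is meaningful only once the direction $p_{j+1}$ has actually been generated, i.e. the algorithm has not stopped before Step~$j+1$. Under the standing hypothesis that the iterates through the step in question are well defined, (\ref{four_star}) then follows for every such $j$.
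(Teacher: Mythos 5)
Your proof is correct and follows essentially the same route as the paper's (two-line) argument: both read the membership $Ap_j \in \mathrm{span}\{p_{j+1},p_j,p_{\max\{0,j-1\}}\}$ directly off the Step~$0$ relation for $j=0$ and off the three-term recurrence at Step~$j+1$ for $j\geq 1$, using $\gamma_j\neq 0$ from Assumption~\ref{assu_0} to solve for $Ap_j$. Your explicit handling of the collapsed span at $j=0$ and the caveat that $p_{j+1}$ must actually have been generated are sound refinements of the same idea, not a different approach.
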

\begin{proof}
From the Step~0 relation (\ref{four_star}) holds for $j=0$. Then,
for $j=1, \ldots, k-1$ the Step $j+1$ of $CD$ directly
yields (\ref{four_star}). $\hfill \Box$
\end{proof}
\begin{theorem}
\label{theorem1}{\em \bf [Conjugacy]} Let Assumption~\ref{assu_0} hold. At Step $k$ of
the {\rm $CD$} class, with $k \geq 0$, the
directions \ $p_0,p_1, \ldots, p_k$ are mutually conjugate, i.e. $p_i^TAp_j=0$, with $0 \leq i \not = j \leq k$.
\end{theorem}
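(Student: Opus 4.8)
The plan is to prove the full conjugacy $p_i^TAp_j=0$ for all $0\le i\neq j\le k$ by induction on $k$, using strong induction to carry along the accumulated conjugacy of all previously generated directions. The base cases are immediate: at Step~0 we have only $p_0$, and at Step~1 the coefficient $\sigma_0$ is defined precisely so that $p_1^TAp_0=0$, which one verifies by substituting $p_1=\gamma_0 Ap_0-\sigma_0 p_0$ and using $\sigma_0=\gamma_0\|Ap_0\|^2/p_0^TAp_0$. So I would assume as inductive hypothesis that $p_0,\dots,p_{k-1}$ are mutually conjugate, and then show that the newly formed $p_k=\gamma_{k-1}Ap_{k-1}-\sigma_{k-1}p_{k-1}-\omega_{k-1}p_{k-2}$ is conjugate to every $p_j$ with $j\le k-1$.

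The argument splits according to how far $j$ is from $k$. First, for $j=k-1$ and $j=k-2$, conjugacy $p_k^TAp_{k-1}=0$ and $p_k^TAp_{k-2}=0$ holds by construction: these are exactly the two conditions (\ref{two_star}) that the coefficients $\sigma_{k-1}$ and $\omega_{k-1}$ were chosen to enforce. I would still record the short verification that the stated formulas for $\sigma_{k-1}$ and $\omega_{k-1}$ do solve the linear system (\ref{two_star}); this uses the inductive conjugacy $p_{k-1}^TAp_{k-2}=0$ to decouple the two equations, so that each coefficient is determined independently — $\sigma_{k-1}$ by the $p_{k-1}$ equation and $\omega_{k-1}$ by the $p_{k-2}$ equation. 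The second equality defining $\omega_{k-1}$ in the table, relating the two expressions via the ratio $\gamma_{k-1}/\gamma_{k-2}$, follows from Lemma~\ref{lemma0} applied at index $k-2$ together with conjugacy of the earlier directions.

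The genuinely inductive part is the range $0\le j\le k-3$. Here I would form $p_k^TAp_j$ by expanding $p_k$ through its definition:
\begin{equation}
    p_k^TAp_j = \gamma_{k-1}(Ap_{k-1})^TAp_j - \sigma_{k-1}p_{k-1}^TAp_j - \omega_{k-1}p_{k-2}^TAp_j. \label{eq:expand}
\end{equation}
The last two terms vanish immediately by the inductive hypothesis, since $j\le k-3<k-2,k-1$. For the first term the key idea is to move one factor of $A$ off of $p_{k-1}$ and onto $p_j$, writing $(Ap_{k-1})^TAp_j=(p_{k-1})^TA(Ap_j)$ by symmetry of $A$, and then invoke Lemma~\ref{lemma0} to control $Ap_j$. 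The lemma gives $Ap_j\in\mathrm{span}\{p_{j+1},p_j,p_{\max\{0,j-1\}}\}$, and since $j\le k-3$ all three indices $j+1,j,j-1$ are at most $k-2$, strictly less than $k-1$. Consequently $p_{k-1}^TA(Ap_j)$ is a linear combination of inner products $p_{k-1}^TAp_{j'}$ with $j'\le k-2<k-1$, each of which vanishes by the inductive hypothesis. Hence the first term is zero as well, completing the induction.

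I expect the main obstacle to be purely bookkeeping: ensuring the index arithmetic in the $j\le k-3$ case is airtight, in particular that the ``shift by $A$'' via Lemma~\ref{lemma0} never produces an index that reaches $k-1$ or $k$ (which would make the inductive hypothesis inapplicable). This is exactly why the case $j=k-2$ must be handled separately by construction rather than by the lemma — for $j=k-2$ the lemma would yield the index $j+1=k-1$, and $p_{k-1}^TAp_{k-1}\neq0$, so the telescoping argument breaks and one genuinely needs the defining equation $p_k^TAp_{k-2}=0$. Making this boundary explicit, and confirming that the decoupling of (\ref{two_star}) rests on the already-established $p_{k-1}^TAp_{k-2}=0$, is the crux that ties the three cases together into a clean induction.
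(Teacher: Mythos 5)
Your proposal is correct and follows essentially the same route as the paper's proof: induction on $k$, with the cases $j=k-1$ and $j=k-2$ settled by the defining choice of $\sigma_{k-1}$ and $\omega_{k-1}$ (decoupled via the inductive hypothesis), and the case $j\le k-3$ settled by combining the inductive hypothesis with Lemma~\ref{lemma0} to shift $A$ onto $p_j$. The only difference is that you spell out the index bookkeeping and the decoupling of (\ref{two_star}) that the paper's proof leaves implicit, which is a faithful elaboration rather than a different argument.
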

\begin{proof}
The statement holds for Step~0, as a consequence of the choice of the
coefficient $\sigma_0$. Suppose it holds for $k-1$; then, we
have for $j \leq k-1$
\begin{eqnarray*}
   p_k^TAp_j & = & \left(\gamma_{k-1}Ap_{k-1} -\sigma_{k-1}p_{k-1} -\omega_{k-1}p_{k-2} \right)^TAp_j \\
                 & = & (\gamma_{k-1}Ap_{k-1})^TAp_j -\sigma_{k-1}p_{k-1}^TAp_j -\omega_{k-1}p_{k-2}^TAp_j \ = \ 0.
\end{eqnarray*}
In particular, for $j=k-1$ and $j=k-2$ the choice of the
coefficients $\sigma_{k-1}$ and $\omega_{k-1}$, and the inductive
hypothesis, yield directly $p_k^TAp_{k-1}=p_k^TAp_{k-2}=0$. For $j
< k-2$, the inductive hypothesis and Lemma \ref{lemma0} again
yield the conjugacy property. $\hfill \Box$
\end{proof}
\begin{lemma}
\label{lemma2} Let Assumption~\ref{assu_0} hold. Given
the {\rm $CD$} class, we have for $k \geq 2$
   $$ (Ap_k)^T(Ap_i) \ = \ \left\{ \begin{array}{ccl}
                                     \|Ap_k\|^2, & \ \ \ \ \ & {\rm if} \ \ i=k,  \\
                                                 &           &      \\
                                      \frac{1}{\gamma_{k-1}}p_k^TAp_k, &           & {\rm if} \ \ i=k-1, \\
                                                 &           &       \\
                                      \emptyset, &           & {\rm if} \ \ i \leq k-2.
                                  \end{array}    \right. $$
\end{lemma}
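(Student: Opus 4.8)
The plan is to invert the defining three-term recurrence so that each product $Ap_j$ is written explicitly as a combination of already-available directions, and then to collapse the resulting inner products by means of the conjugacy established in Theorem~\ref{theorem1}. Solving the Step~$k$ update $p_{j+1} = \gamma_j Ap_j - \sigma_j p_j - \omega_j p_{j-1}$ for the leading term gives
\begin{equation}
    Ap_j = \frac{1}{\gamma_j}\left(p_{j+1} + \sigma_j p_j + \omega_j p_{j-1}\right), \qquad j \geq 1, \label{plan_inv}
\end{equation}
where for $j=0$ the $\omega$-term is simply absent (equivalently $p_{-1}=0$). This is just the explicit form of the span relation of Lemma~\ref{lemma0}, and it is the whole engine of the proof: it trades each $Ap_i$ for the triple $p_{i+1}, p_i, p_{i-1}$, whose behaviour against $p_k$ under $A$ is dictated by conjugacy.

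The entry $i=k$ needs no argument, since $(Ap_k)^T(Ap_k)=\|Ap_k\|^2$ by definition. For $i=k-1$, I would substitute (\ref{plan_inv}) with $j=k-1$ and distribute, obtaining
\[
    (Ap_k)^T(Ap_{k-1}) = \frac{1}{\gamma_{k-1}}\left[(Ap_k)^Tp_k + \sigma_{k-1}(Ap_k)^Tp_{k-1} + \omega_{k-1}(Ap_k)^Tp_{k-2}\right].
\]
Transposing $A$ onto $p_k$ (it is symmetric) turns each factor $(Ap_k)^Tp_\ell$ into $p_k^TAp_\ell$; the last two terms then vanish by the conjugacy relations $p_k^TAp_{k-1}=p_k^TAp_{k-2}=0$ of Theorem~\ref{theorem1}, while the first equals $p_k^TAp_k$. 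This leaves precisely $(1/\gamma_{k-1})\,p_k^TAp_k$, as claimed.

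For $i \leq k-2$ the identical substitution expresses $(Ap_k)^T(Ap_i)$ as a fixed multiple of $(Ap_k)^Tp_{i+1}$, $(Ap_k)^Tp_i$ and $(Ap_k)^Tp_{i-1}$. The constraint $i \leq k-2$ guarantees $i+1 \leq k-1 < k$, so every index occurring is strictly below $k$; after moving $A$ onto $p_k$ each term has the form $p_k^TAp_\ell$ with $\ell < k$ and is therefore zero by Theorem~\ref{theorem1}. Hence $(Ap_k)^T(Ap_i)=0$, which is exactly what the empty entry in the statement records. I do not expect a genuine obstacle here: once (\ref{plan_inv}) is available the result is entirely driven by conjugacy, and the only delicate point is the bookkeeping of boundary indices — the missing $\omega$-term when $j=0$ and the hypothesis $k\geq 2$, which is precisely what guarantees that $p_{k-2}$ is defined and that the case split above is exhaustive.
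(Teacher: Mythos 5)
Your proof is correct and follows exactly the route the paper intends: its one-line proof declares the result a ``trivial consequence of Step $k$, Lemma~\ref{lemma0} and Theorem~\ref{theorem1}'', and your argument is precisely the fleshed-out version — inverting the three-term recurrence to get $Ap_j \in {\rm span}\{p_{j+1},p_j,p_{j-1}\}$ explicitly, then killing terms via the mutual conjugacy of Theorem~\ref{theorem1}. Your handling of the boundary cases ($j=0$ lacking the $\omega$-term, and $k\geq 2$ ensuring $p_{k-2}$ exists) is also sound.
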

\begin{proof}
The statement is a trivial consequence of Step $k$ of the {\rm $CD$}, Lemma \ref{lemma0} and
Theorem \ref{theorem1}. $\hfill \Box$
\end{proof}

Observe that from the previous lemma, a simplified expression for
the coefficient $\omega_{k-1}$, at Step $k$ of $CD$ is
available, inasmuch as
\begin{equation}
   \omega_{k-1} \ = \ \frac{\gamma_{k-1}}{\gamma_{k-2}} \cdot \frac{p_{k-1}^TAp_{k-1}}{p_{k-2}^TAp_{k-2}}.   \label{two_x}
\end{equation}
Relation (\ref{two_x}) has a remarkable importance: it avoids the
storage of the vector $Ap_{k-2}$ at Step $k$, requiring only the
storage of the quantity $p_{k-2}^TAp_{k-2}$. Also observe that
unlike the CG, the sequence $\{p_k\}$ in $CD$ is
computed independently of the sequence $\{r_k\}$. Moreover, as we said the
residual $r_k$ is simply computed at Step $k$ in order to check
the stopping condition for the algorithm.

The following result proves that the $CD$ class
recovers the main theoretical properties of the
standard CG.
\begin{theorem}{\em \bf [Orthogonality]}
\label{teo:properties}
Let Assumption~\ref{assu_0} hold. Let $r_{k+1} \not = 0$ at Step
$k+1$ of the {\rm $CD$} class, with $k \geq 0$.
Then, the directions $p_0, p_1, \ldots, p_k$ and the residuals
$r_0, r_1, \ldots, r_{k+1}$ satisfy
\begin{eqnarray}
   r_{k+1}^Tp_j \ = \ 0, \ \ \ \ \ \ \ \ \ \ j \leq k,  & & \label{rela_1} \\
        \                                               & & \nonumber      \\
   r_{k+1}^Tr_j \ = \ 0, \ \ \ \ \ \ \ \ \ \ j \leq k.  & & \label{rela_2}
\end{eqnarray}
\end{theorem}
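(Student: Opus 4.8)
The plan is to prove the two orthogonality relations in turn, deriving (\ref{rela_2}) as a consequence of (\ref{rela_1}). First I would establish (\ref{rela_1}) by induction on $k$, exploiting the conjugacy already guaranteed by Theorem~\ref{theorem1}. For the base case $k=0$ the only claim is $r_1^Tp_0=0$: since the step length is defined by $a_0 = r_0^Tp_0/p_0^TAp_0$ and $r_1 = r_0 - a_0Ap_0$, a direct substitution gives $r_1^Tp_0 = r_0^Tp_0 - a_0\,p_0^TAp_0 = 0$. This is the mechanism behind the whole relation: the step length is chosen precisely to annihilate the inner product of the new residual against the current direction.

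For the inductive step, assume $r_k^Tp_j = 0$ for all $j \leq k-1$ and write $r_{k+1} = r_k - a_kAp_k$. For $j=k$ the same computation as in the base case, using $a_k = r_k^Tp_k/p_k^TAp_k$, yields $r_{k+1}^Tp_k = 0$. For $j \leq k-1$ I would expand $r_{k+1}^Tp_j = r_k^Tp_j - a_k\,p_k^TAp_j$, where the first term vanishes by the inductive hypothesis and the second by the conjugacy property $p_k^TAp_j = 0$ of Theorem~\ref{theorem1} (valid since $j \neq k$). Hence $r_{k+1}^Tp_j = 0$ for every $j \leq k$, which is exactly (\ref{rela_1}).

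To obtain (\ref{rela_2}), the key intermediate fact is that each residual lies in the span of the directions generated so far, namely $r_j \in \mathrm{span}\{p_0,\ldots,p_j\}$ for all $j$. I would verify this by induction using $r_0 = p_0$, the recurrence $r_j = r_{j-1} - a_{j-1}Ap_{j-1}$, and Lemma~\ref{lemma0}, which places $Ap_{j-1}$ in $\mathrm{span}\{p_j,p_{j-1},p_{\max\{0,j-2\}}\}$; thus each new residual introduces only the single new direction $p_j$. Given this, for $j \leq k$ I would write $r_j$ as a linear combination of $p_0,\ldots,p_j$ and take the inner product with $r_{k+1}$: every resulting term has the form $r_{k+1}^Tp_i$ with $i \leq j \leq k$, which vanishes by (\ref{rela_1}). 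Therefore $r_{k+1}^Tr_j = 0$, establishing (\ref{rela_2}).

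I do not expect a serious obstacle, since the heavy lifting is already carried out by the conjugacy result: once $p_k^TAp_j=0$ is available for $j<k$, both orthogonality relations reduce to bookkeeping driven by the definition of the step lengths. The only point demanding a little care is the span identity $r_j \in \mathrm{span}\{p_0,\ldots,p_j\}$, where one must track the indices produced by Lemma~\ref{lemma0} and confirm that summing the terms $Ap_0,\ldots,Ap_{j-1}$ never reaches beyond $p_j$; the $\max\{0,j-2\}$ clause is precisely what keeps the recurrence from leaking into higher indices.
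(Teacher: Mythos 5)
Your proof is correct, and while your treatment of (\ref{rela_1}) is essentially the paper's argument in inductive rather than telescoped form (the paper writes $r_{k+1}=r_j-\sum_{i=j}^k a_iAp_i$ and kills all terms at once via the choice of $a_j$ and Theorem~\ref{theorem1}), your route to (\ref{rela_2}) is genuinely different. The paper proves (\ref{rela_2}) by a second induction with an explicit expansion of $r_{k+1}^Tr_j=(r_k-a_kAp_k)^T(r_0-\sum_{i=0}^{j-1}a_iAp_i)$, which requires substituting $Ap_i=\frac{1}{\gamma_i}(p_{i+1}+\sigma_ip_i+\omega_ip_{i-1})$ and invoking Lemma~\ref{lemma2} to evaluate the $(Ap_k)^TAp_i$ cross-products, with separate bookkeeping for $j=k$ and $j<k$. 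You instead prove the span inclusion $r_j\in{\rm span}\{p_0,\ldots,p_j\}$ (a clean induction on the residual recurrence combined with Lemma~\ref{lemma0}, where indeed the $\max\{0,j-2\}$ clause guarantees no leakage to higher indices) and then obtain (\ref{rela_2}) as an immediate corollary of (\ref{rela_1}), with no appeal to Lemma~\ref{lemma2} and no second induction over inner products. This is the classical CG-style argument transplanted to $CD$, and it is shorter and less error-prone; what the paper's heavier computation buys in exchange is an explicit display of how the recurrence coefficients $\sigma_i,\omega_i$ and the products $(Ap_k)^TAp_i$ enter the inner products, which foreshadows the perturbation analysis of conjugacy loss in Sect.~\ref{sec:conjugacy_loss}. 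One point worth making explicit in your write-up: your use of Theorem~\ref{theorem1} inside the induction for (\ref{rela_1}) is legitimate precisely because the conjugacy proof in this framework is independent of the residuals (the sequence $\{p_k\}$ in $CD$ is built without reference to $\{r_k\}$), so no circularity arises; you use this tacitly and it deserves a sentence.
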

\begin{proof}
From Step $k+1$ of $CD$ we have $r_{k+1}=r_k - a_k
Ap_k = r_j - \sum_{i=j}^k a_i Ap_i$, for any $j \leq k$. Then, from
Theorem \ref{theorem1} and the choice of coefficient $\alpha_j$ we
obtain
   $$ r_{k+1}^Tp_j \ =  \ \left( r_j - \sum_{i=j}^k a_i Ap_i \right)^Tp_j  \ = \
   r_j^Tp_j - \sum_{i=j}^k a_i p_i^TAp_j \ = \ 0, \ \ \ \ \ \ \ \ j \leq k,  $$
which proves (\ref{rela_1}). As regards relation (\ref{rela_2}),
for $k=0$ we obtain from the choice of $a_0$
  $$ r_1^Tr_0  \ =  \ r_1^Tp_0  \ =  \ 0.  $$
Then, assuming by induction that (\ref{rela_2}) holds for $k-1$,
we have
\begin{eqnarray*}
   r_{k+1}^Tr_j &  =  & \left( r_k - a_k Ap_k \right)^T r_j  \ =  \ \left( r_k - a_k Ap_k \right)^T \left( r_0 - \sum_{i=0}^{j-1} a_i Ap_i \right)   \\
                &  =  & r_k^Tr_0 - \sum_{i=0}^{j-1} a_i r_k^TAp_i - a_k p_k^TAr_0 + \sum_{i=0}^{j-1} a_i a_k (Ap_k)^TAp_i, \ \ \ \ \ \ \ \ \ j \leq
                k.
\end{eqnarray*}
The inductive hypothesis and Theorem \ref{theorem1} yield for $j \leq k$ (in the next relation when $i=0$ then $p_{i-1} \equiv 0$)
\begin{equation}
     r_{k+1}^Tr_j  \ =  \ - \sum_{i=0}^{j-1} \frac{a_i r_k^T}{\gamma_i} \left(p_{i+1}+\sigma_i p_i +
     \omega_ip_{i-1} \right) +  \sum_{i=0}^{j-1} a_i a_k (Ap_k)^TAp_i.
     \label{new_star}
\end{equation}
Therefore, if $j=k$ the relation (\ref{rela_1}) along with Lemma \ref{lemma2}
and the choice of $a_k$ yield
\begin{eqnarray*}
     r_{k+1}^Tr_k  &  =  & - \frac{a_{k-1}}{\gamma_{k-1}}r_k^Tp_k + \frac{a_{k-1} a_k}{\gamma_{k-1}} p_k^TAp_k  \ = \ 0.
\end{eqnarray*}
On the other hand, if $j<k$ in (\ref{new_star}),
 the inductive hypothesis, relation (\ref{rela_1}) and Lemma
\ref{lemma2} yield (\ref{rela_2}).
$\hfill \Box$
\end{proof}

Finally, we prove that likewise the CG, in at most $n$
iterations $CD$ determines the solution of the linear
system (\ref{system}), so that finite convergence holds.
\begin{lemma}{\em \bf [Finite convergence]}
\label{lem:finite_convergence}
Let Assumption~\ref{assu_0} hold. At Step $k$ of the $CD$ class, with $k \geq 0$,  the vectors $p_0, \ldots,
p_k$ are linearly independent. Moreover, in at most $n$ iterations
the {\rm $CD$} class computes the solution of the
linear system (\ref{system}), i.e. $Ay_h=b$, for some $h \leq n$.
\end{lemma}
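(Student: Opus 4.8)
The plan is to prove the two assertions in sequence, leaning on the conjugacy established in Theorem~\ref{theorem1} and the orthogonality relations of Theorem~\ref{teo:properties}. For the linear independence of $p_0,\ldots,p_k$, the natural route is the classical ``conjugate directions are independent'' argument: if $\sum_{i=0}^k c_i p_i = 0$, then left-multiplying by $p_j^T A$ and invoking the conjugacy $p_i^TAp_j=0$ ($i\neq j$) of Theorem~\ref{theorem1} collapses the sum to $c_j\,(p_j^TAp_j)=0$; since $A\succ 0$ and $p_j\neq 0$, we have $p_j^TAp_j>0$ and hence $c_j=0$ for every $j$. The one gap this leaves, and what I expect to be the main obstacle, is verifying that each $p_j$ is genuinely nonzero, because conjugate vectors are only guaranteed independent once none of them vanishes (and a vanishing $p_j$ would moreover render the coefficients $\sigma_j,\omega_j$ ill-defined, breaking the recursion itself).

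To close that gap I would show, by induction on $j$, that $r_j^Tp_j\neq 0$ whenever the algorithm has not yet stopped, i.e.\ $r_0,\ldots,r_j\neq 0$. Using relation (\ref{rela_1}) of Theorem~\ref{teo:properties}, the terms $r_j^Tp_{j-1}$ and $r_j^Tp_{j-2}$ in the expansion $p_j=\gamma_{j-1}Ap_{j-1}-\sigma_{j-1}p_{j-1}-\omega_{j-1}p_{j-2}$ both vanish, leaving $r_j^Tp_j=\gamma_{j-1}\,r_j^TAp_{j-1}$. Substituting $Ap_{j-1}=(r_{j-1}-r_j)/a_{j-1}$ (read off from $r_j=r_{j-1}-a_{j-1}Ap_{j-1}$) and using the residual orthogonality $r_j^Tr_{j-1}=0$ from (\ref{rela_2}) gives $r_j^Tp_j=-\gamma_{j-1}\|r_j\|^2/a_{j-1}$, which is nonzero since $\gamma_{j-1}\neq 0$, $r_j\neq 0$, and $a_{j-1}\neq 0$. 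The last fact follows from the inductive hypothesis, as $a_{j-1}=r_{j-1}^Tp_{j-1}/p_{j-1}^TAp_{j-1}$ has nonzero numerator by the previous step; the base case $p_0=r_0\neq 0$ is immediate. Hence $p_j\neq 0$ throughout, and the independence argument of the first paragraph applies.

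Finally, for finite convergence I would argue by contradiction on the residuals. If $r_h\neq 0$ for all $h\leq n$, then relation (\ref{rela_2}) of Theorem~\ref{teo:properties} exhibits $r_0,r_1,\ldots,r_n$ as $n+1$ mutually orthogonal nonzero vectors in $\mathbb{R}^n$, which is impossible. Therefore $r_h=0$ for some $h\leq n$, and since $r_h=b-Ay_h$ this is exactly $Ay_h=b$. Equivalently, and in closer analogy with the CG, once $p_0,\ldots,p_{n-1}$ are known to be independent they form a basis of $\mathbb{R}^n$, so the orthogonality $r_n^Tp_j=0$ for all $j\leq n-1$ given by (\ref{rela_1}) forces $r_n=0$. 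Either route establishes the claimed finite convergence in at most $n$ iterations.
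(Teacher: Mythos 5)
Your proof is correct and follows essentially the same route the paper takes, which the paper compresses into a reference to ``standard guidelines'' and \cite{M06}: conjugacy (Theorem~\ref{theorem1}) gives linear independence of the $p_i$, and the orthogonality $r_m^Tp_j=0$, $j\leq m-1$, of (\ref{rela_1}) forces $r_m=0$ for some $m\leq n$, whence $y_m$ solves (\ref{system}). Your inductive verification that no $p_j$ vanishes while $r_j\neq 0$ --- via $r_j^Tp_j=-\left(\gamma_{j-1}/a_{j-1}\right)\|r_j\|^2\neq 0$, which is exactly relation (i) of Sect.~\ref{sec:relaz_CG-CG_2step} --- supplies precisely the detail the paper leaves implicit, so the independence argument (and the well-definedness of the coefficients $a_j$, $\sigma_j$, $\omega_j$) is fully justified.
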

\begin{proof}
The proof follows very standard guidelines (the reader may also refer
to \cite{M06}). Thus, by (\ref{rela_1}) an integer $m \leq n$ exists such that $r_m = b-Ay_m=0$. Then, if $y^\ast$ is the solution of (\ref{system}), we have
    $$ 0 = b-Ay_m = Ay^\ast - A \left[ y_0 + \sum_{i=0}^{m-1} a_i p_i \right]  \qquad \Longleftrightarrow \qquad y^\ast = y_0 + \sum_{i=0}^{m-1} a_i p_i. $$

$\hfill \Box$
\end{proof}

\begin{remark}
\label{rem:1}
{\em
Observe that there is the additional chance to replace the Step 0 in Table~\ref{tab_CG-2step}, with the following CG-like Step $0_b$
\begin{center}
\begin{tabular}{l}
{\bf Step $0_b$:} \ Set $k=0$, $y_0 \in \mathbb{R}^n$, $r_0:=b-Ay_0$. \\
\hspace*{1cm} $\quad$ If $r_0 = 0$, then STOP. Else, set $p_0:= r_0$, \ $k = k+1$.  \\
\hspace*{1cm} $\quad$ Compute $a_0:=r_0^Tp_0 / p_0^TAp_0$, \\
\hspace*{1cm} $\quad$ $y_1 := y_0 + a_0 p_0$, \ $r_1:= r_0 - a_0 Ap_0$.  \\
\hspace*{1cm} $\quad$ If $r_1 = 0$, then STOP. Else, set $\sigma_0:= -\|r_1\|^2 / \|r_0\|^2$, \ \\
\hspace*{1cm} $\quad$ $p_1 := r_1 +\sigma_0p_0$, \  $k = k+1$.
\end{tabular}
\end{center}
}
\end{remark}

\section{Further Properties for $CD$}
\label{sec:further_prop}
In this section we consider some properties of $CD$ which represent a natural extension of analogous properties of the CG. To this purpose we introduce the {\em error function}
\begin{equation}
     f(y) := \frac{1}{2}(y-y^\ast)^TA(y-y^\ast), \qquad {\rm with} \quad Ay^\ast =b, \label{equ:func_error}
\end{equation}
and the quadratic functional
\begin{equation}
     g(y) := \frac{1}{2}(y-y_i)^TA(y-y_i), \qquad {\rm with} \quad i \in \{1, \dots,m\},        \label{equ:g_func}
\end{equation}
which satisfy $f(y)\geq 0$, $g(y) \geq 0$, for any $y \in \mathbb{R}^n$, when $A \succeq 0$. Then, we have the following result, where we prove minimization properties of the error function $f(y)$ (see also Theorem 6.1 in \cite{HS52}) and $g(y)$ (see also \cite{P87}), along with the fact that $CD$ provides a suitable approximation of the inverse matrix $A^{-1}$, too.
\begin{theorem}
\label{prop:further_theor}{\em \bf [Further Properties]}
Consider the linear system (\ref{system}) with $A \succeq 0$, and the functions $f(y)$ and $g(y)$ in (\ref{equ:func_error})-(\ref{equ:g_func}). Assume that the $CD$ has performed $m+1$ iterations, with $m+1 \leq n$ and $Ay_{m+1}=b$. Let $\gamma_{i-1} \neq 0$ with $i \geq 1$. Then,
\begin{itemize}
    \item[$\bullet$] $\sigma_0$ minimizes $g(y)$ on the manifold $(y_1+\gamma_0Ap_0) + {\rm span} \{ p_0\}$,
    \item[$\bullet$] $\sigma_{i-1}$ and $\omega_{i-1}$, $i = 2,\dots,m$, minimize $g(y)$ on the two dimensional manifold $(y_i+\gamma_{i-1}Ap_{i-1}) + {\rm span} \{ p_{i-1}, p_{i-2}\}$.
\end{itemize}
Moreover,
\begin{equation}
     \displaystyle \quad f(y_i + a_ip_i) = f(y_i)- \left(\frac{\gamma_{i-1}}{a_{i-1}}\right)^2 \frac{\|r_i\|^4}{p_i^TAp_i}, \qquad i=1, \dots,m,            \label{equ:quad}
\end{equation}
and we have
\begin{equation}
    \displaystyle \left[A^+ - \sum_{i=0}^m \frac{p_ip_i^T}{p_i^TAp_i}\right] r_0 = 0, \qquad\qquad  {\rm for \ any \ } y_0 \in \mathbb{R}^n.        \label{equ:quad_2}
\end{equation}
\end{theorem}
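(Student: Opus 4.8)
The theorem has three distinct assertions, so I would organize the proof into three corresponding parts, treating the minimization claims, the error-decrease formula~(\ref{equ:quad}), and the pseudoinverse identity~(\ref{equ:quad_2}) in turn.

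The plan is to first handle the two bullet points about minimizing $g(y)$. Here I would observe that the coefficients $\sigma_{i-1}$ and $\omega_{i-1}$ in Table~\ref{tab_CG-2step} are precisely those that enforce the conjugacy conditions~(\ref{two_star}), namely $p_i^TAp_{i-1}=0$ and $p_i^TAp_{i-2}=0$. For the first bullet, consider a point $y = (y_1 + \gamma_0 Ap_0) + t\, p_0$ on the affine line and express $g(y) = \frac{1}{2}(y - y_1)^T A (y - y_1)$. Substituting, the quadratic in $t$ has its stationary point exactly where the gradient $p_0^T A\bigl((y_1 + \gamma_0 Ap_0 + t p_0) - y_1\bigr) = 0$, which rearranges to $t = -\gamma_0 (Ap_0)^T Ap_0 / (p_0^T Ap_0) = -\sigma_0$; the point $y_1 + \gamma_0 Ap_0 - \sigma_0 p_0$ is then $y_1 + p_1$, so $\sigma_0$ is the minimizing coefficient. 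For the second bullet I would do the analogous two-variable computation: minimizing $g$ over $(y_i + \gamma_{i-1}Ap_{i-1}) + \mathrm{span}\{p_{i-1}, p_{i-2}\}$ gives normal equations whose solution, thanks to the conjugacy $p_{i-1}^T A p_{i-2}=0$ (Theorem~\ref{theorem1}), decouples into the two scalar conditions~(\ref{two_star}), yielding exactly $\sigma_{i-1}$ and $\omega_{i-1}$.

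Next I would prove the error-decrease identity~(\ref{equ:quad}). Writing $f(y_i + a_i p_i) = f(y_i) + a_i\, \nabla f(y_i)^T p_i + \frac{1}{2} a_i^2\, p_i^T A p_i$ and noting that $\nabla f(y_i) = A(y_i - y^\ast) = -r_i$, this collapses to $f(y_i) - a_i\, r_i^T p_i + \frac{1}{2}a_i^2\, p_i^T A p_i$. Using the definition $a_i = r_i^T p_i / (p_i^T A p_i)$ from the algorithm, the last two terms combine to $-\frac{1}{2}(r_i^T p_i)^2 / (p_i^T A p_i)$. The remaining task is to show $r_i^T p_i = (\gamma_{i-1}/a_{i-1})\|r_i\|^2$ up to the correct power, matching the stated form. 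To do this I would use the recurrence $p_i = \gamma_{i-1}Ap_{i-1} - \sigma_{i-1}p_{i-1} - \omega_{i-1}p_{i-2}$ and the orthogonality relations~(\ref{rela_1}) from Theorem~\ref{teo:properties}: since $r_i \perp p_{i-1}, p_{i-2}$, only the $\gamma_{i-1}Ap_{i-1}$ term survives, giving $r_i^T p_i = \gamma_{i-1}\, r_i^T A p_{i-1}$, and then rewriting $Ap_{i-1} = (r_{i-1}-r_i)/a_{i-1}$ together with $r_i^T r_{i-1}=0$ (relation~(\ref{rela_2})) yields $r_i^T p_i = -(\gamma_{i-1}/a_{i-1})\|r_i\|^2$. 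Squaring produces the factor $(\gamma_{i-1}/a_{i-1})^2 \|r_i\|^4$ and the coefficient $\tfrac12$ is absorbed correctly.

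Finally, for~(\ref{equ:quad_2}) I would exploit finite convergence (Lemma~\ref{lem:finite_convergence}): since $y^\ast - y_0 = \sum_{i=0}^m a_i p_i$ and $r_0 = A(y^\ast - y_0)$ (on $R(A)$, with $A^+ A$ acting as the identity there), it suffices to show $\sum_{i=0}^m \frac{p_i p_i^T}{p_i^T A p_i} r_0 = A^+ r_0 = y^\ast - y_0$. Computing the $i$-th summand, $\frac{p_i^T r_0}{p_i^T A p_i} p_i$, I would show $p_i^T r_0 = a_i\, p_i^T A p_i$: indeed $r_0 = r_i + \sum_{j=0}^{i-1} a_j A p_j$, and conjugacy (Theorem~\ref{theorem1}) kills every term $p_i^T A p_j$ with $j<i$, while $p_i^T r_i = a_i\, p_i^T A p_i$ by definition of $a_i$. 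Hence each summand equals $a_i p_i$, and the sum telescopes to $\sum_{i=0}^m a_i p_i = y^\ast - y_0 = A^+ r_0$, giving~(\ref{equ:quad_2}). The one point requiring care is the pseudoinverse bookkeeping when $A$ is only positive semidefinite: I must argue that $r_0 \in R(A)$ (which holds since $r_0 = A(y^\ast - y_0)$) so that $A^+ r_0 = y^\ast - y_0$ up to the null-space component, which the projection $A^+ A$ removes consistently. I expect this pseudoinverse normalization to be the main obstacle, since the rest reduces to the standard CG-style conjugacy and orthogonality bookkeeping already established in Theorems~\ref{theorem1} and~\ref{teo:properties}.
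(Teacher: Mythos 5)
Your plan is correct in substance and follows essentially the same route as the paper's own proof in all three parts: the manifold minimizations are handled by the same stationarity/normal-equation computation, decoupled through the conjugacy $p_{i-1}^TAp_{i-2}=0$; the error decrease by the same expansion $f(y_i+a_ip_i)=f(y_i)-\frac{1}{2}(r_i^Tp_i)^2/(p_i^TAp_i)$ combined with $r_i^Tp_i=-(\gamma_{i-1}/a_{i-1})\|r_i\|^2$; and the pseudoinverse identity by the same combination of finite convergence, $p_i^Tr_0=p_i^Tr_i=a_i\,p_i^TAp_i$, and projection onto $R(A)$.

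Two caveats. First, your claim that the coefficient $\frac{1}{2}$ ``is absorbed correctly'' is not right: the computation yields $f(y_i+a_ip_i)=f(y_i)-\frac{1}{2}\left(\gamma_{i-1}/a_{i-1}\right)^2\|r_i\|^4/(p_i^TAp_i)$, so the formula as printed in (\ref{equ:quad}) is off by a factor $\frac{1}{2}$. The paper's proof makes the identical silent drop when passing from (\ref{eq:reduction_error}) to (\ref{equ:quad}), so this is evidently a typo in the statement rather than a flaw in your derivation --- but your write-up should say the factor is there, not that it disappears. Second, in the proof of (\ref{equ:quad_2}), the equality $A^+r_0=\sum_{i=0}^m a_ip_i$ requires not only $r_0\in R(A)$ but also $\sum_{i=0}^m a_ip_i\in R(A)$: indeed $A^+r_0=A^+A(y_{m+1}-y_0)=Pr_{R(A)}(y_{m+1}-y_0)$, and this projection equals $\sum_i a_ip_i$ exactly because each $p_i$ lies in $R(A)$, which needs the induction $p_i\in{\cal K}_i(b,A)\subseteq R(A)$ (stated explicitly in the paper). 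Your phrase about the null-space component being ``removed consistently'' gestures at this but must be made precise by that induction, which follows immediately from $p_0=r_0\in R(A)$ and the three-term recursion for $p_k$.
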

\begin{proof}
Observe that for $i=1$, indicating in Table \ref{tab_CG-2step} \ $p_1 = \gamma_0 A p_0 + a p_0$, with $a \in \mathbb{R}$, by (\ref{equ:g_func})
    $$ g(y_2)=g(y_1+a_1p_1)= \frac{a_1^2}{2} (\gamma_0 A p_0 + a p_0)^TA(\gamma_0 A p_0 + a p_0)$$
and we have
    \[ 0 = \left.\frac{\partial g(y_2)}{\partial a}\right|_{a=a^\ast} =  a_1^2 p_0^TA(\gamma_0 A p_0 + a^\ast p_0) \quad \Longleftrightarrow \quad a^\ast = -\gamma_0 \frac{\|Ap_0\|^2}{p_0^Tap_0} = -\sigma_0.  \]
For $i \geq 2$, if we indicate in Table \ref{tab_CG-2step} \ $p_{i} = \gamma_{i-1}Ap_{i-1}+bp_{i-1}+cp_{i-2}$, with $b,c \in \mathbb{R}$, then by (\ref{equ:g_func})
\begin{eqnarray*}
    g(y_i+a_ip_i) & = \frac{a_i^2}{2}(\gamma_{i-1}Ap_{i-1}+bp_{i-1}+cp_{i-2})^TA(\gamma_{i-1}Ap_{i-1}+bp_{i-1}+cp_{i-2})
\end{eqnarray*}
and by Assumption~\ref{assu_0}, after some computation, the equalities
    $$ \left\{  \displaystyle
       \begin{array}{l}
            \displaystyle \left.\frac{\partial g(y_{i+1})}{\partial b}\right|_{b=b^\ast  , \ c=c^\ast} =
            \left. \frac{\partial g(y_i+a_ip_i)}{\partial b}\right|_{b=b^\ast, \ c=c^\ast} =
             0  \\
                \       \\
            \displaystyle \left.\frac{\partial g(y_{i+1})}{\partial c}\right|_{b=b^\ast  , \ c=c^\ast} =
            \left. \frac{\partial g(y_i+a_ip_i)}{\partial c}\right|_{b=b^\ast, \ c=c^\ast} =
            0
       \end{array} \right.$$
imply the unique solution
\begin{equation}
     \left\{  \displaystyle
       \begin{array}{l}
            \displaystyle b^\ast = - \gamma_{i-1} \frac{\|Ap_{i-1}\|^2}{p_{i-1}^TAp_{i-1}} = -\sigma_{i-1}  \\
                \       \\
            \displaystyle c^\ast = - \gamma_{i-1} \frac{(Ap_{i-1})^T(Ap_{i-2})}{p_{i-2}^TAp_{i-2}} = - \frac{\gamma_{i-1}}{\gamma_{i-2}} \frac{p_{i-1}^TAp_{i-1}}{p_{i-2}^TAp_{i-2}} = -\omega_{i-1}.
       \end{array} \right.      \label{equ:optimal_2dim_mani}
\end{equation}
As regards (\ref{equ:quad}), from Table \ref{tab_CG-2step} we have that for any $i \geq 1$
\begin{eqnarray}
    f(y_i + a_ip_i) & = & f(y_i) + a_i (y_i-y^\ast)^TAp_i + \frac{1}{2}a_i^2 p_i^TAp_i  \nonumber \\
                    & = & f(y_i) - a_i r_i^Tp_i + \frac{1}{2}a_i^2 p_i^TAp_i  \nonumber \\
                    & = & f(y_i) - \frac{1}{2}\frac{(r_i^Tp_i)^2}{p_i^TAp_i}. \label{eq:reduction_error}
\end{eqnarray}
Now, since $r_i=r_{i-1}-a_{i-1}Ap_{i-1}$ we have
\begin{eqnarray*}
     p_i &= &\gamma_{i-1} \left( \frac{r_{i-1}-r_i}{a_{i-1}} \right) - \sigma_{i-1}p_{i-1}, \qquad\qquad\qquad\quad \ \!  i = 1, \\
     p_i &= &\gamma_{i-1} \left( \frac{r_{i-1}-r_i}{a_{i-1}} \right) - \sigma_{i-1}p_{i-1} - \omega_{i-1}p_{i-2}, \qquad i \geq 2,
\end{eqnarray*}
so that from Theorem \ref{teo:properties}
    $$ r_i^Tp_i = - \frac{\gamma_{i-1}}{a_{i-1}}\|r_i\|^2. $$
The latter relation and (\ref{eq:reduction_error}) yield (\ref{equ:quad}).

As regards (\ref{equ:quad_2}), since $Ay_{m+1}=b$ then $b \in R(A)$, and from Table \ref{tab_CG-2step} then $r_i \in {\cal K}_i(b,A) \subseteq R(A)$, $i=0,\dots,m$, where ${\cal K}_{i+1}(b,A) \supseteq{\cal K}_i(b,A)$. In addition, by the definition of Moore-Penrose pseudoinverse matrix (see \cite{CM79}), and since $y_{m+1}$ is a solution of (\ref{system}) we have
\begin{eqnarray}
    Pr_{R(A)}(y_{m+1}) & = & A^+b \ = \ A^+(r_0+Ay_0)   \nonumber \\
                       &   &                            \nonumber \\
                       & = & A^+r_0 + Pr_{R(A)}(y_0).   \label{equ:proj}
\end{eqnarray}
Moreover, $y_{m+1}=y_0+\sum_{i=0}^m a_ip_i$ and by induction $p_i \in {\cal K}_i(b,A) \subseteq R(A)$, thus
\begin{eqnarray}
    Pr_{R(A)}(y_{m+1}) & = & Pr_{R(A)}(y_0) + Pr_{R(A)}\left(\sum_{i=0}^ma_ip_i\right)   \nonumber \\
                       & = & Pr_{R(A)}(y_0) + \sum_{i=0}^ma_ip_i.   \label{equ:proj_2}
\end{eqnarray}
By (\ref{equ:proj}), (\ref{equ:proj_2}) and recalling that for $CD$ we have $p_i^Tr_i= p_i^T(r_{i-1}-a_{i-1}Ap_{i-1}) =p_i^Tr_{i-1}= \cdots = p_i^Tr_0$, we obtain
    $$ A^+r_0 = \sum_{i=0}^m a_ip_i = \sum_{i=0}^m \frac{p_i^Tr_i}{p_i^TAp_i}p_i = \sum_{i=0}^m \frac{p_ip_i^T}{p_i^TAp_i}r_0, $$
which yields (\ref{equ:quad_2}).
$\hfill \Box$
\end{proof}

Observe that the result in (\ref{equ:optimal_2dim_mani}) may be seen as a consequence of the Theorem 3.6 in \cite{H80}, which holds for a general quadratic functional $g(x)$.
\begin{corollary}{\em \bf [Inverse Approximation]}
Let Assumption~\ref{assu_0} hold and suppose that $Ay_{m+1}=b$, where $y_{m+1}$ is computed by $CD$ and $m=n-1$. Then, we have
    $$ A^{-1} = \sum_{i=0}^{n-1} \frac{p_ip_i^T}{p_i^TAp_i}. $$

\end{corollary}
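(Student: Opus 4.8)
The plan is to read the statement as the pointwise identity (\ref{equ:quad_2}) of Theorem~\ref{prop:further_theor}, upgraded to a full matrix equality by invoking the basis property of the conjugate directions. Under Assumption~\ref{assu_0} the matrix $A$ is symmetric positive definite, hence nonsingular, so its Moore--Penrose pseudoinverse coincides with the ordinary inverse, $A^+ = A^{-1}$. With $m = n-1$, Lemma~\ref{lem:finite_convergence} supplies both the finite convergence $Ay_n = b$ and, crucially, the linear independence of $p_0, p_1, \ldots, p_{n-1}$; these $n$ vectors therefore form a basis of $\mathbb{R}^n$.

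First I would collect the directions into the matrix $P := [\,p_0 \mid p_1 \mid \cdots \mid p_{n-1}\,] \in \mathbb{R}^{n\times n}$, which is nonsingular by the linear independence just recalled. By the conjugacy established in Theorem~\ref{theorem1}, the entry of $P^T A P$ in position $(i,j)$ equals $p_i^T A p_j$ and vanishes whenever $i \neq j$; consequently $P^T A P = D$, where $D := \mathrm{diag}(p_0^T A p_0, \ldots, p_{n-1}^T A p_{n-1})$. Since $A \succ 0$ and $P$ is nonsingular, each diagonal entry $p_i^T A p_i$ is strictly positive, so $D$ is invertible.

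It then remains only to invert this factorization. From $P^T A P = D$ one obtains $A = P^{-T} D P^{-1}$, hence $A^{-1} = P D^{-1} P^T$. Because the columns of $P D^{-1}$ are the vectors $p_i/(p_i^T A p_i)$, expanding the product $P D^{-1} P^T$ as a sum of outer products of corresponding columns yields
$$A^{-1} = P D^{-1} P^T = \sum_{i=0}^{n-1} \frac{p_i p_i^T}{p_i^T A p_i},$$
which is precisely the claimed identity. Equivalently, writing $M := \sum_{i=0}^{n-1} p_i p_i^T/(p_i^T A p_i)$, one may verify directly from conjugacy that $M A p_j = p_j$ for every $j$, so that $MA$ agrees with the identity on the basis $\{p_j\}$ and therefore $M = A^{-1}$.

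I do not anticipate a genuine obstacle, since the corollary is essentially a repackaging of (\ref{equ:quad_2}) once $A^+$ is identified with $A^{-1}$. The single point deserving care is the passage from the vector relation $[A^{-1} - M]\,r_0 = 0$ to the full matrix equality: this is exactly where the linear independence from Lemma~\ref{lem:finite_convergence} is indispensable, as it promotes $\{p_i\}$ to a basis and thereby forces the bracketed matrix to annihilate all of $\mathbb{R}^n$. The factorization route above sidesteps even this subtlety by establishing the matrix identity directly from conjugacy and invertibility of $P$.
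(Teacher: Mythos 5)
Your proof is correct, but it takes a genuinely different route from the paper's. The paper proves the corollary in one line by citing the vector identity (\ref{equ:quad_2}) from Theorem \ref{prop:further_theor}, the linear independence of $p_0,\dots,p_{n-1}$ from Lemma \ref{lem:finite_convergence}, and the fact that $A^{+}\equiv A^{-1}$ for nonsingular $A$; in other words, it leverages the pseudoinverse/projection machinery already built for (\ref{equ:quad_2}), which has the added value of holding in the merely positive semidefinite case. Your primary argument instead establishes the matrix identity directly: conjugacy (Theorem \ref{theorem1}) gives $P^TAP=D$ with $P$ nonsingular by Lemma \ref{lem:finite_convergence}, whence $A^{-1}=PD^{-1}P^T=\sum_{i=0}^{n-1}p_ip_i^T/(p_i^TAp_i)$. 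This is self-contained, bypasses (\ref{equ:quad_2}) and the Moore--Penrose apparatus entirely, and — as you note — sidesteps a genuine subtlety that the paper's terse proof glosses over: the relation $[A^{-1}-M]\,r_0=0$ for a \emph{single} $r_0$ does not, by itself, force $M=A^{-1}$, even knowing $\{p_i\}$ is a basis (a symmetric matrix can annihilate one vector without vanishing), and the quantifier ``for any $y_0$'' in (\ref{equ:quad_2}) does not immediately rescue this, since the directions $p_i$ themselves change with $y_0$. Your closing verification that $MAp_j=p_j$ for every $j$ of the basis $\{p_j\}$ is exactly the clean way to close that gap, so your write-up is in fact more rigorous than the paper's one-line proof, at the cost of not reusing the already-proven identity.
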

\begin{proof}
The proof follows from (\ref{equ:quad_2}), recalling that the directions $p_0, \dots,p_{n-1}$ are linearly independent and when $A$ is nonsingular $A^{-1} \equiv A^+$.
$\hfill \Box$
\end{proof}

\section{Basic Relation Between the CG and $CD$}
\label{sec:relaz_CG-CG_2step}
Observe that the geometry of vectors $\{p_k\}$ and $\{r_k\}$ in
$CD$ might be substantially different with respect to the CG.
Indeed, in the latter scheme the relation
$p_k=r_k+\beta_{k-1}p_{k-1}$ implies $r_k^Tp_k = \|r_k\|^2
> 0$, for any $k$. On the contrary, for the $CD$, using relation $r_k=r_{k-1}-a_{k-1}Ap_{k-1}$
and Theorem \ref{teo:properties} we have that possibly $r_k^Tp_k \neq \|r_k\|^2$ and
\begin{eqnarray*}
     \frac{p_k^TAp_k}{p_{k-1}^TAp_{k-1}} & = & \gamma_{k-1}\frac{(Ap_{k-1})^TAp_k}{p_{k-1}^TAp_{k-1}} \ = \ - \frac{\gamma_{k-1}\|r_k\|^2}{a_k a_{k-1}p_{k-1}^TAp_{k-1}} \\
                & = & - \gamma_{k-1}\frac{\|r_k\|^2 p_k^TAp_k}{(r_k^Tp_k) (r_{k-1}^Tp_{k-1})},
\end{eqnarray*}
so that when $A \succ 0$ we obtain
\begin{equation}
     \gamma_{k-1}(r_k^Tp_k)(r_{k-1}^Tp_{k-1}) < 0.  \label{eq:sign}
\end{equation}
The latter result is a consequence of the fact that in the $CD$ class, the direction $p_k$ is not generated
directly using the vector $r_k$. In addition, a similar conclusion also holds if we compute the quantity  $p_k^Tp_j > 0$, $k \not = j$, for both the CG and the $CD$ (see also Theorem 5.3 in \cite{HS52}).

As another difference between the CG and $CD$, we have that in the first algorithm the coefficient $\beta_{k-1}$, at Step $k$ in Table~\ref{tab_CG}, is always positive. On the other hand, the coefficients $\gamma_{k-1}$, $\sigma_{k-1}$ and $\omega_{k-1}$ at Step $k$ of Table~\ref{tab_CG-2step} might be possibly negative.

We also observe that the CG in Table~\ref{tab_CG} simply
stores at Step $k$ the vectors $r_{k-1}$ and $p_{k-1}$, in order
to compute respectively $r_{k}$ and $p_{k}$. On the other hand, at
Step $k$ the $CD$ requires the storage of one additional vector,
which contains some information from iteration $k-2$. The idea of
storing at Step $k$ some information from iterations preceding
Step $k-1$ is not new for Krylov-based methods. Some examples,
which differ from our approach, may be found in \cite{G97}, for
unsymmetric linear systems.

In any case, it is not difficult to verify that the CG may be equivalently obtained from $CD$, setting $\gamma_{k-1}=-\alpha_{k-1}$, for $k=1,2,\ldots$, in Table \ref{tab_CG-2step}. Indeed, though in Table \ref{tab_CG} the coefficient $\beta_{k-1}$ {\em explicitly} imposes the conjugacy only between $p_k$ and $p_{k-1}$, the pair $(\alpha_{k-1},\beta_{k-1})$ implicitly imposes both the conditions (\ref{two_star}) for the CG. Now, by (\ref{eq:altern_CG}) and comparing with Step $k$ of Table \ref{tab_CG-2step}, we want to show that setting $\gamma_{k-1}=-\alpha_{k-1}$ in Table \ref{tab_CG-2step} we obtain
\begin{equation}
     \left\{\begin{array}{lcl}
            \sigma_{k-1} = -(1+ \beta_{k-1}), & \quad & k \geq 1,    \\
                    & &      \\
            \omega_{k-1} = \beta_{k-2}, & \quad & k \geq 2,
       \end{array}   \right.       \label{equ:equiv_coeff}
\end{equation}
which implies that $CD$ reduces equivalently to the CG.
\\
For the CG $r_i^Tr_j=0$, for $i \not = j$, and $p_i^Tr_i = \|r_i\|^2$,  so that
    $$\beta_{k-1}:= \frac{\|r_k\|^2}{\|r_{k-1}\|^2} = - \frac{r_k^T(\alpha_{k-1}Ap_{k-1})}{\|r_{k-1}\|^2} = - \frac{r_k^TAp_{k-1}}{p_{k-1}^TAp_{k-1}}.$$
Thus, recalling that $r_{k-1} = r_{k-2}-\alpha_{k-2}Ap_{k-2}$ and $p_{k-1} = r_{k-1}+\beta_{k-2}p_{k-2}$,  we obtain for $\gamma_{k-1}=-\alpha_{k-1}$, with $k \geq 2$,
\begin{eqnarray}
    -(1+ \beta_{k-1}) & = & - \frac{p_{k-1}^TAp_{k-1} - r_k^TAp_{k-1}}{p_{k-1}^TAp_{k-1}}    \nonumber \\
                      & = & - \frac{(p_{k-1}- r_{k-1}+ \alpha_{k-1}Ap_{k-1})^TAp_{k-1}}{p_{k-1}^TAp_{k-1}}  \nonumber \\
                      & = & - \alpha_{k-1} \frac{\|Ap_{k-1}\|^2}{p_{k-1}^TAp_{k-1}} \ = \ \sigma_{k-1}  \label{equ:sigma}
\end{eqnarray}
and
\begin{eqnarray}
    \beta_{k-2}       & = & - \frac{r_{k-1}^TAp_{k-2}}{p_{k-2}^TAp_{k-2}} \ = \
                            \frac{\|r_{k-1}\|^2}{\alpha_{k-2}} \frac{1}{p_{k-2}^TAp_{k-2}}  \nonumber  \\
                      & = & \frac{\alpha_{k-1}}{\alpha_{k-2}}  \frac{p_{k-1}^TAp_{k-1}}{p_{k-2}^TAp_{k-2}} \ = \
                             \omega_{k-1}.  \label{equ:omega}
\end{eqnarray}
Finally, 
it is worth noticing that for $CD$ the following two properties hold, for any $k \geq 2$ ((i)-(ii) also hold for $k=1$, with obvious modifications to (i)):
\begin{itemize}
    \item[(i)] $\quad \displaystyle r_k^Tp_k = r_k^T \left[ \gamma_{k-1} \left( \frac{r_{k-1}-r_k}{a_{k-1}} \right) - \sigma_{k-1}p_{k-1}
                - \omega_{k-1}p_{k-2} \right] = - \frac{\gamma_{k-1}}{a_{k-1}}\|r_k\|^2$
    \item[(ii)] $\quad \displaystyle r_k^TAp_k = r_k^T \left( \frac{r_k-r_{k+1}}{a_k} \right)  = \frac{1}{a_k}\|r_k\|^2 = \frac{\|r_k\|^2}{r_k^Tp_k}p_k^TAp_k$,
\end{itemize}
which indicate explicitly a difference with respect to the CG. Indeed, for any $\gamma_{k-1} \not = -a_{k-1}$  we have respectively from (i) and (ii)
    $$ \begin{array}{c}
            r_k^Tp_k \not = \|r_k\|^2   \\
                \   \\
            r_k^TAp_k \not = p_k^TAp_k.
                   \end{array} $$
\begin{figure*}
\begin{center}
\includegraphics[width=0.76\textwidth]{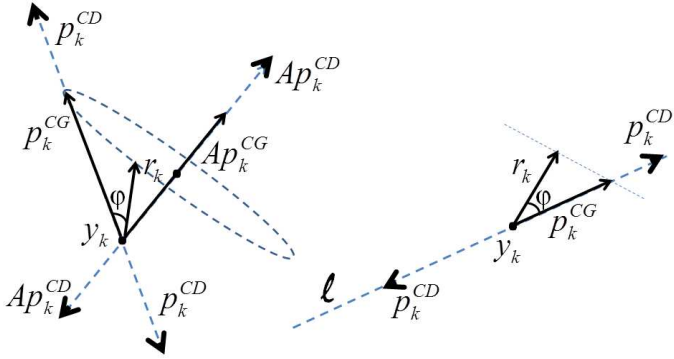}
\end{center}
\caption{At the $k$th iteration of the CG and $CD$, the directions $p_k^{CG}$ and $p_k^{CD}$ are respectively generated, along the line $\ell$. Applying the CG, the vectors $p_k^{CG}$ and $r_k$ have the same orthogonal projection on $Ap_k^{CG}$, since $(p_k^{CG})^TAp_k^{CG} = r_k^TAp_k^{CG}$. Applying $CD$, the latter equality with  $p_k^{CD}$ in place of $p_k^{CG}$ is not necessarily satisfied}
\label{fig:AAA_b}       
\end{figure*}
\noindent
Figure \ref{fig:AAA_b} clarifies the geometry of items (i) and (ii) for both the CG and $CD$.
\\
Relations (\ref{equ:sigma})-(\ref{equ:omega}) suggest that the sequence $\{\gamma_k\}$ must satisfy specific conditions in order to reduce $CD$ equivalently to the CG. For a possible generalization of the latter conclusion, consider that equalities (\ref{equ:equiv_coeff}) are by (\ref{eq:altern_CG}) sufficient conditions in order to reduce $CD$ equivalently to the CG. Thus, now we want to study general conditions on the sequence $\{\gamma_k\}$, such that (\ref{equ:equiv_coeff}) are satisfied. By (\ref{equ:equiv_coeff}) we have
    \[ -(1+\omega_k) = \sigma_{k-1}, \]
which is equivalent from Table \ref{tab_CG-2step} to
\begin{equation}    \label{equ_gamma_1}
     - \left(\gamma_{k-1}\|Ap_{k-1}\|^2 + p_{k-1}^TAp_{k-1} \right) = \frac{\gamma_k}{\gamma_{k-1}}p_k^TAp_k
\end{equation}
or
\begin{equation}    \label{equ_gamma_2}
     - \gamma_{k-1}^2\|Ap_{k-1}\|^2 - \gamma_{k-1} p_{k-1}^TAp_{k-1} -\gamma_kp_k^TAp_k =0.
\end{equation}
The latter equality, for $k \geq 1$, and the choice of $\sigma_0$ in Table \ref{tab_CG-2step} yield the following conclusions.
\begin{table}
\caption{The new $CD$-red class for solving (\ref{system}), obtained by setting at Step $k$ of $CD$ the parameter $\gamma_k$ as in relation (\ref{equ:relaz_gamma}).}
\begin{center}
\fbox{
\begin{tabular}{l}
    \   \\
\multicolumn{1}{c}{\bf The $CD$-red class}  \\
    \   \\
{\bf Step $0$:} $\quad\!$ Set $k=0$, $y_0 \in \mathbb{R}^n$, $r_0:=b-Ay_0$. \\
\hspace*{1cm} $\quad$ If $r_0 = 0$, then STOP. Else, set $p_0 := r_0$, $k=k+1$.  \\
\hspace*{1cm} $\quad$ Compute $a_0:=r_0^Tp_0 / p_0^TAp_0$, \ $\gamma_0 := -a_0$, \\
\hspace*{1cm} $\quad$ $y_1 := y_0 + a_0 p_0$, \ $r_1 := r_0 - a_0 Ap_0$.  \\
\hspace*{1cm} $\quad$ If $r_1 = 0$, then STOP. Else, set $\sigma_0:= \gamma_0\|Ap_0\|^2 / p_0^TAp_0$, \ $\beta_0 = -(1+\sigma_0) $ \\
\hspace*{1cm} $\quad$ $p_1 := r_1 + \beta_0p_0$, \  $k = k+1$. \\
{\bf Step $k$:} $\quad\!$ Compute $a_{k-1}:= r_{k-1}^Tp_{k-1} / p_{k-1}^TAp_{k-1}$, \\
\hspace*{1cm} $\quad$ $y_k := y_{k-1} + a_{k-1} p_{k-1}$, \ $r_k := r_{k-1} - a_{k-1}Ap_{k-1}$. \\
\hspace*{1cm} $\quad$ If $r_k = 0$, then STOP. Else, use (\ref{equ:relaz_gamma}) to compute $\gamma_{k-1}$.    \\
\hspace*{1cm} $\quad$ Set $\sigma_{k-1}:= \gamma_{k-1}\frac{\|Ap_{k-1}\|^2}{p_{k-1}^TAp_{k-1}}$, \ $\beta_{k-1} := -(1+\sigma_{k-1})$  \\
\hspace*{1cm} $\quad$ $p_k := r_k + \beta_{k-1}p_{k-1}$, \ $k = k+1$. \\
\hspace*{1cm} $\quad$ Go to {\bf Step $k$}.
\end{tabular}
}
\end{center}
\label{tab_CG-red}
\end{table}
\begin{lemma}{\em \bf [Reduction of $CD$]}
\label{prop:gamma_relation}
The scheme $CD$ in Table \ref{tab_CG-2step} can be rewritten as in Table \ref{tab_CG-red} (i.e. with the CG-like structure of Table \ref{tab_CG}), provided that the sequence $\{\gamma_k\}$ satisfies $\gamma_0 := -a_0$ and
\begin{equation}
     \gamma_k := - \frac{\gamma_{k-1}^2\|Ap_{k-1}\|^2 + \gamma_{k-1} p_{k-1}^TAp_{k-1}}{p_k^TAp_k},    \qquad k \geq 1.  \label{equ:relaz_gamma}
\end{equation}
In particular, the positions $\gamma_i=-a_i$, $i \geq 0$, in $CD$ satisfy (\ref{equ:relaz_gamma}).
\end{lemma}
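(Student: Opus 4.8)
The plan is to reduce both assertions to the single arithmetical fact that the prescription $\gamma_0 := -a_0$ together with the recursion (\ref{equ:relaz_gamma}) forces $\gamma_i = -a_i$ for every $i \geq 0$; once this is granted, the rewriting of $CD$ as $CD$-red follows at once from the discussion preceding the Lemma. Recall from that discussion that (\ref{equ:relaz_gamma}) is nothing but the scalar identity $\sigma_{k-1} = -(1+\omega_k)$, i.e., upon setting $\beta_{k-1} := -(1+\sigma_{k-1})$, the pair (\ref{equ:equiv_coeff}); these, supplemented by $\gamma_{k-1} = -a_{k-1}$, make the three coefficients of the $CD$ update $\gamma_{k-1}Ap_{k-1} - \sigma_{k-1}p_{k-1} - \omega_{k-1}p_{k-2}$ coincide with those in (\ref{eq:altern_CG}), so that $p_k$ collapses to the two-term form $p_k = r_k + \beta_{k-1}p_{k-1}$ of Table~\ref{tab_CG-red}. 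Everything therefore hinges on establishing $\gamma_i = -a_i$.

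I would prove $\gamma_i = -a_i$ by induction, the base case being the prescribed $\gamma_0 := -a_0$. For the inductive step, assume $\gamma_j = -a_j$ for all $j \leq k-1$. Then item (i) of Sect.~\ref{sec:relaz_CG-CG_2step} yields $r_{k-1}^Tp_{k-1} = \|r_{k-1}\|^2$ and $r_k^Tp_k = \|r_k\|^2$, while Theorem~\ref{teo:properties} gives $r_{k-1}^Tr_k = 0$. Writing $a_{k-1}Ap_{k-1} = r_{k-1}-r_k$, the numerator of (\ref{equ:relaz_gamma}) becomes
\[
   \gamma_{k-1}^2\|Ap_{k-1}\|^2 + \gamma_{k-1}p_{k-1}^TAp_{k-1} \ = \ \|a_{k-1}Ap_{k-1}\|^2 - a_{k-1}p_{k-1}^TAp_{k-1} \ = \ \|r_{k-1}-r_k\|^2 - \|r_{k-1}\|^2 \ = \ \|r_k\|^2 ,
\]
where I used $a_{k-1}p_{k-1}^TAp_{k-1} = r_{k-1}^Tp_{k-1} = \|r_{k-1}\|^2$ and, by orthogonality, $\|r_{k-1}-r_k\|^2 = \|r_{k-1}\|^2 + \|r_k\|^2$. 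Since likewise $p_k^TAp_k = r_k^Tp_k/a_k = \|r_k\|^2/a_k$, relation (\ref{equ:relaz_gamma}) returns $\gamma_k = -\|r_k\|^2 / (\|r_k\|^2/a_k) = -a_k$, closing the induction. Performed with a free index, this very computation is exactly the `In particular' assertion that the positions $\gamma_i = -a_i$ satisfy (\ref{equ:relaz_gamma}).

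With $\gamma_i = -a_i$ now in force for all $i$, the coefficient identification of the first paragraph applies at every step: indeed $\beta_{k-1} = -(1+\sigma_{k-1})$ equals the CG coefficient by (\ref{equ:sigma}), so the two-term recursion of Table~\ref{tab_CG-red} reproduces precisely the sequences $\{p_k\}$, $\{r_k\}$ and $\{y_k\}$ generated by $CD$, which is the claimed rewriting. The only delicate point is the collapse of the numerator to $\|r_k\|^2$: it rests on the CG-type identity $r_j^Tp_j = \|r_j\|^2$, which by item (i) holds only when $\gamma_{j-1} = -a_{j-1}$ -- precisely the equality that fails for a generic choice of the parameter, as item (i) shows through $r_k^Tp_k = -(\gamma_{k-1}/a_{k-1})\|r_k\|^2$. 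The induction must therefore be organised so that this identity is already guaranteed by the inductive hypothesis at the moment (\ref{equ:relaz_gamma}) is evaluated; arranging this ordering, rather than carrying out any lengthy algebra, is the real content of the argument.
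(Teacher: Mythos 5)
Your proof is correct and, in substance, follows the paper's own argument: the key computation --- collapsing the numerator of (\ref{equ:relaz_gamma}) to $\|r_k\|^2$ via $a_{k-1}Ap_{k-1}=r_{k-1}-r_k$, the orthogonality $r_{k-1}^Tr_k=0$ from Theorem~\ref{teo:properties}, and the identities $a_{j}p_{j}^TAp_{j}=r_{j}^Tp_{j}=\|r_{j}\|^2$ supplied by item (i) of Sect.~\ref{sec:relaz_CG-CG_2step} under $\gamma_{j-1}=-a_{j-1}$ --- is exactly the paper's verification, which reduces (\ref{equ:relaz_gamma}) under the positions $\gamma_i=-a_i$ to the trivially satisfied condition $a_{k-1}^2\|Ap_{k-1}\|^2=\|r_{k-1}\|^2+\|r_k\|^2$. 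The organizational difference is that you run this computation as an induction showing that $\gamma_0:=-a_0$ together with (\ref{equ:relaz_gamma}) \emph{forces} $\gamma_i=-a_i$ for every $i$, and only then identify the three-term update with (\ref{eq:altern_CG}); the paper instead argues the rewriting directly from the equivalence of (\ref{equ:relaz_gamma}) with $\sigma_{k-1}=-(1+\omega_k)$, i.e.\ with (\ref{equ:equiv_coeff}), and checks the positions $\gamma_i=-a_i$ separately as the ``in particular'' claim. Your ordering buys a little extra rigor: identifying $\gamma_{k-1}Ap_{k-1}-\sigma_{k-1}p_{k-1}-\omega_{k-1}p_{k-2}$ with the two-term form $r_k+\beta_{k-1}p_{k-1}$ of Table~\ref{tab_CG-red} requires the coefficient of $Ap_{k-1}$ to equal $-a_{k-1}$, not merely the conditions (\ref{equ:equiv_coeff}), and your induction --- organized, as you correctly stress, so that $r_j^Tp_j=\|r_j\|^2$ is already available from the inductive hypothesis when (\ref{equ:relaz_gamma}) is evaluated --- makes explicit a step the paper's one-line first paragraph leaves implicit, while showing in passing that the recursion's solution is unique, so that $CD$-red is literally the CG.
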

\begin{proof}
By the considerations which led to (\ref{equ_gamma_1})-(\ref{equ_gamma_2}), relation (\ref{equ:relaz_gamma}) yields (\ref{equ:equiv_coeff}), so that the scheme $CD$-red in Table \ref{tab_CG-red} follows from $CD$ with the position (\ref{equ:relaz_gamma}), and setting $\gamma_0=-a_0$.
\\
Furthermore, replacing in (\ref{equ:relaz_gamma}) the conditions $\gamma_i=-a_i$, $i \geq 1$, and recalling (i)-(ii), we obtain the condition $a_{k-1}^2\|Ap_{k-1}\|^2=\|r_{k-1}\|^2 + \|r_k\|^2$, which is immediately fulfilled using condition $r_k=r_{k-1}-a_{k-1}Ap_{k-1}$.
$\hfill \Box$
\end{proof}

Note that the $CD$-red scheme substantially is more similar to the CG than to $CD$. Indeed the conditions (\ref{two_star}), explicitly imposed at Step $k$ of $CD$, reduce to the unique condition $p_k^TAp_{k-1}=0$ in $CD$-red.
\\
The following result is a trivial consequence of Lemma \ref{lem:finite_convergence}, where the alternate use of CG and $CD$ steps is analyzed.
\begin{lemma}{\em \bf [Combined Finite Convergence]}
\label{lem:combined}
Let Assumption~\ref{assu_0} hold. Let $y_1, \ldots,y_h$ be the iterates generated by $CD$, with $h \leq n$ and $Ay_h=b$. Then, finite convergence is preserved (i.e. $Ay_h=b$) if the Step $\hat k$ of $CD$, with $\hat k \in \{k_1, \ldots, k_h\} \subseteq \{1, \ldots,h\}$, is replaced by the Step $\hat k$ of the CG.
\end{lemma}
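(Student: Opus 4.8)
The plan is to show that interleaving $CD$ steps and CG steps produces an algorithm that still satisfies the conjugacy and residual-orthogonality properties of Theorems~\ref{theorem1} and~\ref{teo:properties}, after which Lemma~\ref{lem:finite_convergence} applies verbatim. The quickest route rests on Section~\ref{sec:relaz_CG-CG_2step}: a CG step is nothing but a $CD$ step for the parameter value $\gamma_{\hat k-1}=-a_{\hat k-1}$ (equivalently, the $CD$-red step of Table~\ref{tab_CG-red}). Hence replacing Step $\hat k$ of $CD$ by Step $\hat k$ of the CG simply fixes one admissible entry of the sequence $\{\gamma_k\}$ (admissible because $a_{\hat k-1}\neq 0$ as long as $r_{\hat k-1}\neq 0$), so the combined algorithm is again a member of the $CD$ class and Lemma~\ref{lem:finite_convergence} yields $Ay_h=b$ for some $h\le n$ directly.

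For a self-contained proof I would instead carry out a single induction on the step index $k$, maintaining three invariants that are preserved by \emph{either} kind of step: (a) the Krylov identity ${\rm span}\{p_0,\dots,p_k\}={\cal K}_k(r_0,A)$; (b) the mutual orthogonality $r_i^Tr_j=0$, $i\neq j$; and (c) the conjugacy $p_i^TAp_j=0$, $i\neq j$. Since the residual recursion $r_k=r_{k-1}-a_{k-1}Ap_{k-1}$ and the choice of $a_{k-1}$ are common to Tables~\ref{tab_CG} and~\ref{tab_CG-2step}, the inductive conjugacy gives $r_k\perp{\rm span}\{p_0,\dots,p_{k-1}\}={\cal K}_{k-1}(r_0,A)$; since each $r_j\in{\cal K}_{k-1}(r_0,A)$ for $j\le k-1$, this yields the residual orthogonality (b), while $p_k\in{\cal K}_k(r_0,A)$ together with the nonvanishing top-order term ($a_{k-1}\neq0$, resp.\ $\gamma_{k-1}\neq0$) restores the Krylov identity (a). Conjugacy (c) of the fresh direction $p_k$ with the \emph{earlier} directions $p_j$ is the clean part: substituting $Ap_i=(r_i-r_{i+1})/a_i$ and using (b), one finds for a CG step that $p_k^TAp_j=r_k^TAp_j=r_k^T(r_j-r_{j+1})/a_j=0$ whenever $j\le k-2$, and for a $CD$ step the analogous cancellation $(Ap_{k-1})^TAp_j=0$ for $j\le k-3$; the remaining conjugacies, with $p_{k-1}$ and $p_{k-2}$, are exactly the ones enforced by the step coefficients through (\ref{two_star}).

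The main obstacle is precisely this last, ``immediate'' conjugacy $p_k^TAp_{k-1}=0$ for a CG step inserted \emph{after} a generic $CD$ step. Enforcing it requires $\beta_{k-1}=\|r_k\|^2/(r_{k-1}^Tp_{k-1})$, whereas the textbook coefficient $\|r_k\|^2/\|r_{k-1}\|^2$ coincides with it only when $r_{k-1}^Tp_{k-1}=\|r_{k-1}\|^2$; by item~(i) of Section~\ref{sec:relaz_CG-CG_2step} a preceding $CD$ step gives instead $r_{k-1}^Tp_{k-1}=-(\gamma_{k-2}/a_{k-2})\|r_{k-1}\|^2$, which reduces to $\|r_{k-1}\|^2$ only for $\gamma_{k-2}=-a_{k-2}$. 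The resolution is to read ``Step $\hat k$ of the CG'' as the conjugacy-preserving CG-type step (the $CD$-red step of Table~\ref{tab_CG-red}, or equivalently the coefficient $\beta_{k-1}=\|r_k\|^2/(r_{k-1}^Tp_{k-1})$); with this reading invariant (c) is restored and the induction closes. Finite convergence then follows exactly as in Lemma~\ref{lem:finite_convergence}: by (b) at most $n$ residuals are nonzero, so $r_h=b-Ay_h=0$ for some $h\le n$.
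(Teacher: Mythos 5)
Your proposal is correct, and its opening paragraph is essentially the paper's entire proof: the paper notes that $\|r_k\|>0$ for all $k\leq h$ (so the CG coefficients are well defined), sets at each replaced step $\gamma_{\hat k-1}=-a_{\hat k-1}$, $\sigma_{\hat k-1}=-(1+\beta_{\hat k-1})$, $\omega_{\hat k-1}=\beta_{\hat k-2}$, asserts that Step $\hat k$ of $CD$ then ``coincides formally'' with Step $\hat k$ of the CG (in the three-term form (\ref{eq:altern_CG})), and concludes by invoking Lemma \ref{lem:finite_convergence}, which holds for any sequence $\{\gamma_k\}$ with $\gamma_k\neq 0$. What you add beyond the paper is substantive: the identifications (\ref{equ:equiv_coeff}) were derived in Sect.~\ref{sec:relaz_CG-CG_2step} under the tacit hypothesis that the preceding direction has CG form, so that $r_{\hat k-1}^Tp_{\hat k-1}=\|r_{\hat k-1}\|^2$; you correctly observe that after a generic $CD$ step item (i) of Sect.~\ref{sec:relaz_CG-CG_2step} gives instead $r_{\hat k-1}^Tp_{\hat k-1}=-(\gamma_{\hat k-2}/a_{\hat k-2})\|r_{\hat k-1}\|^2$, whence the literal Table \ref{tab_CG} update with $\beta_{\hat k-1}=\|r_{\hat k}\|^2/\|r_{\hat k-1}\|^2$ produces $p_{\hat k}^TAp_{\hat k-1}= p_{\hat k-1}^TAp_{\hat k-1}\,\bigl(\|r_{\hat k}\|^2/\|r_{\hat k-1}\|^2\bigr)\bigl(1+a_{\hat k-2}/\gamma_{\hat k-2}\bigr)$, which vanishes only when $\gamma_{\hat k-2}=-a_{\hat k-2}$; conjugacy would then fail and Lemma \ref{lem:finite_convergence} would no longer be invocable. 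Your repair --- take $\beta_{\hat k-1}=\|r_{\hat k}\|^2/(r_{\hat k-1}^Tp_{\hat k-1})$, i.e.\ regard the inserted step as the $CD$ step with $\gamma_{\hat k-1}=-a_{\hat k-1}$ --- is precisely the reading under which the paper's ``formal coincidence'' is literally true, since that is how the paper defines the inserted step via the coefficient substitutions above; with that reading the combined scheme is verbatim a member of $CD$, so Theorems \ref{theorem1} and \ref{teo:properties} and Lemma \ref{lem:finite_convergence} apply wholesale, and your stand-alone induction on the invariants (a)--(c) is a sound, if strictly unnecessary, independent verification. In short: same route as the paper, executed more carefully --- the caveat you isolate (either the replaced steps must follow CG-reducible steps, or the CG coefficient must be taken in the conjugacy-preserving form) is real, is glossed over by the paper's proof, and deserves to be stated alongside the lemma.
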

\begin{proof}
First observe that both in Table \ref{tab_CG} and Table \ref{tab_CG-2step}, for any $k \leq h$, the quantity $\|r_k\|>0$ is computed. Thus, in Table \ref{tab_CG} the coefficient $\beta_{k-1}$ is well defined for any $n > k \geq 1$. Now, by Table \ref{tab_CG-2step}, setting at Step $\hat k \in \{k_1, \ldots, k_h\} \subseteq \{1, \ldots,h\}$ the following
    $$ \left\{\begin{array}{lcl}
            \gamma_{\hat k-1} = -a_{\hat k -1} & \quad & {\rm if \ } \hat k \geq 1 \\
                    \       \\
            \sigma_{\hat k-1} = -(1+\beta_{\hat k -1}) & \quad & {\rm if \ } \hat k \geq 1 \\
                    \       \\
            \omega_{\hat k-1} = \beta_{\hat k -2} & \quad & {\rm if \ } \hat k \geq 2, \\
       \end{array} \right.$$
the Step $\hat k$ of $CD$ coincides formally with the Step $\hat k$ of CG. Thus, finite convergence with $Ay_h=b$ is proved recalling that Lemma \ref{lem:finite_convergence} holds for any choice of the sequence $\{\gamma_k\}$, with $\gamma_k \not = 0$.
$\hfill \Box$
\end{proof}

\section{Relation Between the Scaled-CG and $CD$}
\label{sec:scaled}
Similarly to the previous section, here we aim at determining the relation between our proposal in Table \ref{tab_CG-2step} and the scheme of the {\em scaled}-CG in Table \ref{tab_scaled-CG} (see also \cite{H80}, page 125).
\begin{table}
\caption{The scaled-CG algorithm for solving (\ref{system}).}
\begin{center}
\fbox{
\begin{tabular}{l}
    \   \\
\multicolumn{1}{c}{\bf The Scaled-CG method}  \\
    \   \\
{\bf Step $0$:} $\quad\!$ Set $k=0$, \ $y_0 \in \mathbb{R}$, \ $r_{0}:= b-A y_0$.  \\
\hspace*{1cm} $\quad$ If $r_{0}=0$, then STOP. Else, set $p_0 := \rho_0 r_0$, \ $\rho_0>0$, \ $k=k+1$.   \\
{\bf Step $k$:} $\quad\!$ Compute $\alpha_{k-1}:= \rho_{k-1}\|r_{k-1}\|^2/p_{k-1}^{T}Ap_{k-1}$, \ $\rho_{k-1}>0$,  \\
\hspace*{1.0cm} $\quad$ $y_{k}:= y_{k-1} + \alpha_{k-1}p_{k-1}$, \ $r_{k}:= r_{k-1} - \alpha_{k-1}Ap_{k-1}$. \\ \hspace*{1.0cm} $\quad$ If $r_{k}=0$, then STOP. Else, set $\beta_{k-1}:= - p_{k-1}^TAr_{k}/p_{k-1}^TAp_{k-1}$ or \\ \hspace*{5.25cm} $\quad$$\beta_{k-1}:= \|r_{k}\|^2/(\rho_{k-1}\|r_{k-1}\|^2)$    \\
\hspace*{1.0cm} $\quad$ $p_{k}:= \rho_k(r_{k} + \beta_{k-1}p_{k-1})$, \ $\rho_{k}>0$, \ $k = k+1$, \\
\hspace*{1.0cm} $\quad$ Go to {\bf Step $k$}.
\end{tabular}
}
\end{center}
\label{tab_scaled-CG}
\end{table}
In \cite{H80} a motivated choice for the coefficients $\{\rho_k\}$ in the scaled-CG is also given. Here, following the guidelines of the previous section, we first rewrite the relation
    $$ p_{k+1} := \rho_{k+1}(r_{k+1}+\beta_k p_k), $$
at Step $k+1$ of the scaled-CG, as follows
\begin{eqnarray}
    p_{k+1} & = & \rho_{k+1}(r_k - \alpha_k A p_k) + \rho_{k+1}\beta_k p_k  \nonumber   \\
            & = & \rho_{k+1}\left[ \frac{p_k}{\rho_k} - \beta_{k-1} p_{k-1} - \alpha_k A p_k \right] + \rho_{k+1}\beta_k p_k  \nonumber   \\
            & = & -\rho_{k+1}\alpha_k A p_k + \rho_{k+1} \left( \beta_k + \frac{1}{\rho_k}\right) p_k - \rho_{k+1} \beta_{k-1} p_{k-1}.    \label{equ:p_equiv}
\end{eqnarray}
We want to show that for a suitable choice of the parameters $\{\gamma_k\}$, the $CD$ yields the recursion (\ref{equ:p_equiv}) of the scaled-CG, i.e. for a proper choice of $\{\gamma_k\}$ we obtain from CD a scheme equivalent to the scaled-CG. On this purpose let us set in $CD$
\begin{equation}
    \gamma_k = - \rho_{k+1} \alpha_k, \qquad k \geq 0,  \label{equ:choice_gamma}
\end{equation}
where $\alpha_k$ is given at Step $k$ of Table \ref{tab_scaled-CG}. Thus, by Table \ref{tab_CG-2step}
\begin{equation}
    \sigma_k  \ = \ \gamma_k \frac{\|Ap_k\|^2}{p_k^TAp_k} \ = \ - \rho_{k+1} \alpha_k \frac{\|Ap_k\|^2}{p_k^TAp_k}, \qquad k \geq 0, \label{equ:choice_sigma}
\end{equation}
and for $k \geq 1$
\begin{eqnarray}
    \omega_k & = & \frac{\gamma_k}{\gamma_{k-1}} \frac{p_k^TAp_k}{p_{k-1}^TAp_{k-1}} \ = \ \frac{\rho_{k+1} \alpha_k}{\rho_{k} \alpha_{k-1}} \frac{p_k^TAp_k}{p_{k-1}^TAp_{k-1}}.  \label{equ:choice_omega}
\end{eqnarray}
Now, comparing the coefficients in (\ref{equ:p_equiv}) with (\ref{equ:choice_gamma}), (\ref{equ:choice_sigma}) and (\ref{equ:choice_omega}), we want to prove that the choice (\ref{equ:choice_gamma}) implies
\begin{eqnarray}
    \sigma_k &=&-\rho_{k+1} \left( \beta_k + \frac{1}{\rho_k}\right),  \qquad  k \geq 0,  \label{equ_a}  \\
    \omega_k &=&\rho_{k+1} \beta_{k-1},  \qquad\qquad\qquad  k \geq 1,       \label{equ_b}
\end{eqnarray}
so that the $CD$ class yields equivalently the scaled-CG.

As regards (\ref{equ_a}), from Table \ref{tab_scaled-CG} we have for $k \geq 0$
\begin{eqnarray*}
    \beta_k + \frac{1}{\rho_k} & = & \frac{\frac{1}{\rho_k} p_k^TAp_k - r_{k+1}^TAp_k}{p_k^TAp_k} \ = \
                  \frac{\left(\frac{1}{\rho_k} p_k - r_{k+1} \right)^TAp_k}{p_k^TAp_k} \\
                  & = &  \frac{\left(\frac{1}{\rho_k} p_k -r_k + \alpha_k Ap_k \right)^TAp_k}{p_k^TAp_k} \\
                               & = & \frac{\left(r_k + \beta_{k-1}p_{k-1}-r_k + \alpha_k Ap_k \right)^TAp_k}{p_k^TAp_k} \ = \ \alpha_k \frac{\|Ap_k\|^2}{p_k^TAp_k},
\end{eqnarray*}
so that from (\ref{equ:choice_sigma}) the condition (\ref{equ_a}) holds, for any $k \geq 0$. As regards (\ref{equ_b})
from Step $k$ of Table \ref{tab_scaled-CG} we know that $\beta_{k-1}= \|r_k\|^2/(\rho_{k-1}\|r_{k-1}\|^2)$ and, since $r_k^Tp_{k-1}=0$, we obtain $r_k^Tp_k = \rho_k \|r_k\|^2$; thus, relation (\ref{equ:choice_gamma}) yields
    $$ \beta_{k-1} = \frac{\|r_k\|^2}{\rho_{k-1}\|r_{k-1}\|^2}  = \frac{\alpha_k}{\rho_k\alpha_{k-1}} \frac{p_k^TAp_k}{p_{k-1}^TAp_{k-1}} = \frac{\gamma_k}{\rho_{k+1}\gamma_{k-1}} \frac{p_k^TAp_k}{p_{k-1}^TAp_{k-1}}, \ \ k \geq 1.$$
Relation (\ref{equ_b}) is proved using the latter equality and (\ref{equ:choice_omega}).

\section{Matrix Factorization Induced by $CD$}
\label{sec:matr_fact}
We first recall that considering the CG in Table \ref{tab_CG} and setting at Step $h$
   $$  \begin{array}{l}
            P_h := \displaystyle \left(\frac{p_0}{\|r_0\|} \ \cdots \ \frac{p_h}{\|r_h\|}\right)    \\
                \   \\
            R_h := \displaystyle \left(\frac{r_0}{\|r_0\|} \ \cdots \ \frac{r_h}{\|r_h\|}\right),
        \end{array}$$
along with
    $$                      L_h := \left(\begin{array}{ccccc}
                                            1 & &  &  &     \\
                                              & &  &  &     \\
                                            -\sqrt{\beta_0}  & 1 &  &  &     \\
                                             & &  &  &     \\
                                              & -\sqrt{\beta_1}  &  1 &  &       \\
                                               & &  &  &     \\
                                              &   & \ddots   &  1 &             \\
                                               & &  &  &     \\
                                              &   &    &  -\sqrt{\beta_{h-1}} \  & 1
                                        \end{array} \right) \in \mathbb{R}^{h \times h}  $$
and $D_h := {\rm diag}_i\{1/\alpha_i\}$, we obtain the three matrix relations
\begin{eqnarray}
    P_h L_h^T & = & R_h \label{equ:first_prop}  \\
    AP_h & = & R_h L_h D_h - \frac{\sqrt{\beta_h}}{\alpha_h} \frac{r_{h+1}}{\|r_{h+1}\|}e_h^T    \label{equ:second_prop}  \\
    R_h^TAR_h & = & T_h \ = \ L_h D_h L_h^T.    \label{equ:third_prop}
\end{eqnarray}
Then, in this section we are going to use the iteration in Table \ref{tab_CG-2step} in order to possibly recast relations  (\ref{equ:first_prop})-(\ref{equ:third_prop}) for $CD$.

On this purpose, from Table \ref{tab_CG-2step} we can easily draw the following relation between the sequences $\{p_0, p_1, \ldots\}$ and $\{r_0, r_1, \ldots\}$
    $$ \begin{array}{l}
            p_0 \ = \ r_0 \\
                \  \\
            \displaystyle p_1 \ = \ \frac{\gamma_0}{a_0}(r_0-r_1)-\sigma_0p_0  \\
                \  \\
            \displaystyle p_i \ = \ \frac{\gamma_{i-1}}{a_{i-1}}(r_{i-1}-r_i)-\sigma_{i-1}p_{i-1}-\omega_{i-1}p_{i-2}, \qquad\qquad i=2,3,\ldots,
       \end{array} $$
and introducing the positions
    $$  \begin{array}{l}
           \displaystyle  P_h := (p_0 \ p_1 \ \cdots \ p_h)    \\
                \   \\
           \displaystyle  R_h := (r_0 \ r_1 \ \cdots \ r_h)  \\
                \   \\
           \displaystyle  \bar R_h := \left( \frac{r_0}{\|r_0\|} \ \cdots \ \frac{r_h}{\|r_h\|} \right),
        \end{array}$$
along with the matrices
    $$                      U_{h,1} := \left(\begin{array}{ccccccc}
                                            1 & \sigma_0 & \omega_1 & 0 & \cdots & \cdots & 0   \\
                                              & 1 & \sigma_1 & \omega_2 & 0 & \cdots & 0   \\
                                              &   &  1 & \sigma_2 & \ddots & 0 & \vdots    \\
                                              &   &    &  1 & \ddots & \ddots & 0          \\
                                              &   &    &    & \ddots & \ddots & \omega_{h-1}          \\
                                              &   &    &    &        & \ddots & \sigma_{h-1}    \\
                                              &   &    &    &        &  & 1
                                        \end{array} \right) \in \mathbb{R}^{(h+1)\times(h+1)},  $$
    $$                      U_{h,2} := \left(\begin{array}{cccccc}
                                            \|r_0\| & \|r_0\| & 0 & \cdots & \cdots & 0             \\
                                                & & & & &  \\
                                              & -\|r_1\| & \|r_1\| & 0 & \cdots & 0            \\
                                               & & & & &  \\
                                              &   &   -\|r_2\| & \ \|r_2\| & 0  & \vdots               \\
                                               & & & & &  \\
                                              &   &    &    \ddots & \ddots &   0                   \\
                                               & & & & &  \\
                                              &   &    &    & -\|r_{h-1}\| & \ \|r_{h-1}\|                  \\
                                               & & & & &  \\
                                               &    &    &        &        & -\|r_h\|
                                        \end{array} \right) \in \mathbb{R}^{(h+1)\times(h+1)} $$
and
    $$           D_h := {\rm diag} \left\{ 1 \ , \ \  \displaystyle {\rm diag}_{\atop \hspace*{-.8cm}i=0, \ldots, h-1} \{\gamma_i/a_i\} \right\} \in \mathbb{R}^{(h+1)\times(h+1)},  $$
we obtain after $h-1$ iterations of $CD$
    $$ P_h U_{h,1} = \bar R_h U_{h,2} D_h, $$
so that
    $$ P_h = \bar R_h U_{h,2}D_h U_{h,1}^{-1} = \bar R_h U_h, $$
where $U_h = U_{h,2} D_h U_{h,1}^{-1}$. Now, observe that $U_h$ is upper triangular since $U_{h,2}$ is upper bidiagonal, $D_h$ is diagonal and $U_{h,1}^{-1}$ may be easily seen to be upper triangular. As a consequence, recalling that $p_0, \ldots, p_h$ are mutually conjugate we have
    $$ \bar R_h^TA \bar R_h = U_h^{-T} {\rm diag}_i \{p_i^TAp_i\} U_h^{-1}, $$
and in case $h=n-1$, again from the conjugacy of $p_0, \ldots, p_{n-1}$
    $$ P_{n-1}^TAP_{n-1} = U_{n-1}^T\bar R_{n-1}^T A \bar R_{n-1} U_{n-1} = {\rm diag}_{\atop \hspace*{-.8cm}i=0, \ldots, h-1} \{p_i^TAp_i\}.$$
From the orthogonality of $\bar R_{n-1}$, along with relation
    $$ \det(U_{n-1}) = \|r_0\| \prod_{j=1}^{n-1}\left( -\frac{\|r_j\| \gamma_{j-1}}{a_{j-1}} \right) =
                       \left(\prod_{i=0}^{n-1}\|r_i\| \right) \left( \prod_{i=0}^{n-2}-\frac{\gamma_{i}}{a_i} \right),  $$
 we have
    $$ \det\left(U_{n-1}^T\bar R_{n-1}^T A \bar R_{n-1} U_{n-1}\right) = \prod_{i=0}^{n-1}p_i^TAp_i \quad \Longleftrightarrow \quad \det(A)= \frac{\displaystyle \prod_{i=0}^{n-1}p_i^TAp_i}{\det(U_{n-1})^2}.$$
Thus, in the end
\begin{equation}
     \det(A)= \left[\prod_{i=0}^{n-1} \frac{p_i^TAp_i}{\|r_i\|^2} \right]\cdot \frac{\left[\displaystyle\prod_{i=0}^{n-2} a_i^2\right]}{\left[\displaystyle\prod_{i=0}^{n-2} \gamma_i^2\right]}. \label{eq:determ}
\end{equation}
Note that the following considerations hold:
\begin{itemize}
    \item for  $\gamma_i = \pm a_i$ (which includes the case $\gamma_i = - a_i$, when by Lemma \ref{prop:gamma_relation} $CD$ reduces equivalently to the CG), by (i) of Section \ref{sec:relaz_CG-CG_2step} $|p_k^Tr_k| = \|r_k\|^2$, so that we obtain the standard result (see also \cite{HS52})
    $$ \det(A) = \left[\prod_{i=0}^{n-1} \frac{p_i^TAp_i}{\|r_i\|^2} \right] = \prod_{i=0}^{n-1} \frac{1}{ a_i}; $$
    \item if in general $|\gamma_i| \not = | a_i|$ we obtain the general formula (\ref{eq:determ}).
\end{itemize}

\section{Issues on the Conjugacy Loss for $CD$}
\label{sec:conjugacy_loss}
Here we consider a simplified approach to describe the conjugacy
loss for both the CG and $CD$, under
Assumption \ref{assu_0} (see also \cite{HS52} for a similar approach). Suppose that both the CG and
$CD$ perform Step $k+1$, and for numerical reasons a nonzero  {\em conjugacy error}
$\varepsilon_{k,j}$ respectively occurs between directions $p_k$
and $p_j$, i.e.
    $$ \varepsilon_{k,j} \ : = \ p_k^TAp_j \ \not = \ 0, \qquad j \leq k-1. $$
Then, we calculate the conjugacy error
\begin{eqnarray*}
    \varepsilon_{k+1,j} \ = \ p_{k+1}^TAp_j, \qquad\qquad j \leq k,
\end{eqnarray*}
for both the CG and $CD$. First observe that at
Step $k+1$ of Table \ref{tab_CG} we have
\begin{eqnarray}
   \varepsilon_{k+1,j} & = & \left( r_{k+1}+\beta_k p_k \right)^TAp_j   \label{equ:11}  \\
                       &   &      \nonumber    \\
                       & = & \left( p_k -\beta_{k-1}p_{k-1} - \alpha_k Ap_k \right)^TAp_j + \beta_k \varepsilon_{k,j}  \label{equ:12}  \\
                       &   &      \nonumber    \\
                       & = & (1+\beta_k) \varepsilon_{k,j} - \beta_{k-1} \varepsilon_{k-1,j} - \alpha_k
                       (Ap_k)^TAp_j.    \label{relation_3}
\end{eqnarray}
Then, from relation $Ap_j = (r_j - r_{j+1})/ \alpha_j$ and
relations (\ref{rela_1a})-(\ref{rela_2a}) we have for the CG
   $$ \displaystyle (Ap_k)^TAp_j \ = \ \left\{ \begin{array}{cl}
          \displaystyle - \frac{p_k^TAp_k}{\alpha_{k-1}},& \ \ \ \ \ \ \ j = k-1, \\
                                           &                        \\
              \emptyset,                   & \ \ \ \ \ \ \ j \leq
              k-2.
                                \end{array}  \right.  $$
Thus, observing that for the CG we have $\varepsilon_{i,i-1}=0$
and $\varepsilon_{i,i}=p_i^TAp_i$, $1 \leq i \leq k+1$, after some
computation we obtain from (\ref{rela_1a}), (\ref{rela_2a}) and (\ref{relation_3})
\begin{equation}
    \varepsilon_{k+1,j} \ = \ \left\{ \begin{array}{ll}
                     \emptyset,& \ \ \ \ \ \ \ j = k,  \\
                           &                   \\
                             \emptyset,& \ \ \ \ \ \ \ j = k-1,\\
                                       &                       \\
      (1+\beta_k) \varepsilon_{k,k-2}, & \ \ \ \ \ \ \ j = k-2,\\
                                       &                       \\
          (1+\beta_k) \varepsilon_{k,j}- \beta_{k-1} \varepsilon_{k-1,j} - \Sigma_{kj},  & \ \ \ \ \ \ \ j \leq
          k-3,
                                       \end{array}  \right.  \label{rela_3}
\end{equation}
where $\Sigma_{kj} \in \mathbb{R}$ summarizes the contribution of the
term $\alpha_k (Ap_k)^TAp_j$, due to a possible conjugacy loss.

Let us consider now for $CD$ a result similar to (\ref{rela_3}). We obtain the following relations for
$j \leq k$
\begin{eqnarray*}
   \varepsilon_{k+1,j} & = & p_{k+1}^TAp_j \ = \ \left( \gamma_k Ap_k -\sigma_k p_k  - \omega_k p_{k-1} \right)^TAp_j  \\
                       &       &            \\
                       &  =    & \gamma_k (Ap_k)^TAp_j - \sigma_k \varepsilon_{k,j} - \omega_k \varepsilon_{k-1,j}  \\
                       &       &            \\
                       &   =   & \frac{\gamma_k}{\gamma_j}(Ap_k)^T \left( p_{j+1} +\sigma_j p_j  + \omega_j p_{j-1} \right) - \sigma_k \varepsilon_{k,j} - \omega_k \varepsilon_{k-1,j}  \\
                       &       &            \\
                       &   =   & \frac{\gamma_k}{\gamma_j} \varepsilon_{k,j+1} + \left(\frac{\gamma_k}{\gamma_j} \sigma_j - \sigma_k \right) \varepsilon_{k,j}+ \frac{\gamma_k}{\gamma_j} \omega_j \varepsilon_{k,j-1} - \omega_k \varepsilon_{k-1,j},
\end{eqnarray*}
and considering now relations (\ref{two_star}), the conjugacy
among directions $p_0,p_1, \ldots, p_k$ satisfies
\begin{equation}
    \varepsilon_{h,l} \ = \ p_h^TAp_l \ = \ 0, \qquad \qquad {\rm for \ any} \ \mid h-l \mid \ \in \{1,2\}.  \label{three_star}
\end{equation}
Thus, relation (\ref{two_x}) and the expression of the
coefficients in $CD$ yields for $\varepsilon_{k+1,j}$ the expression
\begin{equation}
      \left\{ \begin{array}{ll}
       \emptyset,  & \ \ \ \ \ \ \ \ j = k,  \\
               &                    \\
         \emptyset,  & \ \ \ \ \ \ \ \ j = k-1,  \\
                    &                \\
         \displaystyle \frac{\gamma_k}{\gamma_{k-2}} \omega_{k-2} \varepsilon_{k,k-3},  & \ \ \ \ \ \ \ \ j = k-2, \\
                    &                \\
         \displaystyle \left(\frac{\gamma_k}{\gamma_{k-3}}\sigma_{k-3} - \sigma_k \right) \varepsilon_{k,k-3}+ \frac{\gamma_k}{\gamma_{k-3}}\omega_{k-3} \varepsilon_{k,k-4},  & \ \ \ \ \ \ \ \ j = k-3,  \\
                    &                \\
        \displaystyle \frac{\gamma_k}{\gamma_{j}}\varepsilon_{k,j+1} + \left(\frac{\gamma_k}{\gamma_{j}}\sigma_{j} - \sigma_k \right) \varepsilon_{k,j}+ \frac{\gamma_k}{\gamma_{j}} \omega_j \varepsilon_{k,j-1} - \omega_k\varepsilon_{k-1,j},  & \ \ \ \ \ \ \ \ j \leq k-4.
                                   \end{array} \right.
    \label{rela_4}
\end{equation}
Finally, comparing relations (\ref{rela_3}) and (\ref{rela_4}) we
have
\begin{itemize}
   \item in case $j=k-2$ the conjugacy error $\varepsilon_{k+1,k-2}$ is nonzero for both the CG and $CD$, as expected. However, for the CG
    $$|\varepsilon_{k+1,k-2}| \ > \ |\varepsilon_{k,k-2}|$$
since $(1+\beta_k) > 1$, which theoretically can lead to an
harmful amplification of conjugacy errors. On the contrary, for
 $CD$ the positive quantity $|\gamma_k \omega_{k-2}/ \gamma_{k-2}|$ in the
expression of $\varepsilon_{k+1,k-2}$ can be possibly smaller than
one.

    \item choosing the sequence $\{\gamma_k\}$ such that
\begin{equation}
    \left|\frac{\gamma_k}{\gamma_{k-i}}\right| \ll 1  \qquad {\rm and/or} \qquad
    \left|\frac{\gamma_k}{\gamma_{k-i}}\omega_{k-i}\right| \ll 1, \qquad i=2,3,\ldots    \label{eq:seq_gamma}
\end{equation}
from (\ref{rela_4}) the effects of conjugacy loss may be attenuated. Thus, a strategy to update the sequence $\{\gamma_k\}$ so that (\ref{eq:seq_gamma}) holds might be investigated.

\end{itemize}

\subsection{Bounds for the Coefficients of $CD$}
We want
to describe here the sensitivity of the coefficients $\sigma_k$
and $\omega_k$, at Step $k+1$ of $CD$, to
the condition number $\kappa (A)$. In particular, we want to
provide a comparison with the CG, in order to identify
possible advantages/disadvantages of our proposal. From Table
\ref{tab_CG-2step} and Assumption \ref{assu_0} we have
    $$
      |\omega_k|  \ = \ \left|\frac{\gamma_k}{\gamma_{k-1}}\frac{p_k^TAp_k}{p_{k-1}^TAp_{k-1}}\right|, \qquad
      |\sigma_k|  \ = \ \left|\gamma_k \frac{\|Ap_k\|^2}{p_k^TAp_k}\right|,
    $$
so that
\begin{eqnarray}
     \left\{ \begin{array}{l}
            |\omega_k| \geq \displaystyle \left|\frac{\gamma_k}{\gamma_{k-1}}\right| \frac{\lambda_m(A)\|p_k\|^2}{\lambda_M(A)\|p_{k-1}\|^2} = \left|\frac{\gamma_k}{\gamma_{k-1}}\right| \frac{1}{\kappa(A)} \frac{\|p_k\|^2}{\|p_{k-1}\|^2}  \\
        \  \\
            |\omega_k| \leq \displaystyle \left|\frac{\gamma_k}{\gamma_{k-1}}\right| \frac{\lambda_M(A)\|p_k\|^2}{\lambda_m(A)\|p_{k-1}\|^2} = \left|\frac{\gamma_k}{\gamma_{k-1}} \right| \kappa (A) \frac{\|p_k\|^2}{\|p_{k-1}\|^2},
                                                                     \end{array}  \right.  \label{sigma_{k-2}}
\end{eqnarray}
and
\begin{eqnarray}
    \left\{ \begin{array}{l}
            |\sigma_k| \geq \displaystyle |\gamma_k| \frac{\lambda^2_m(A)\|p_k\|^2}{\lambda_M(A)\|p_k\|^2} = |\gamma_k| \frac{\lambda_m(A)}{\kappa(A)} \\
        \  \\
            |\sigma_k| \leq \displaystyle |\gamma_k| \frac{\lambda^2_M(A)\|p_k\|^2}{\lambda_m(A)\|p_k\|^2} = |\gamma_k| \lambda_M(A) \kappa (A).
                                                                     \end{array}  \right. \label{sigma_{k-1}}
\end{eqnarray}
On the other hand, from Table \ref{tab_CG} we obtain for the CG
    $$ \beta_k  \ = \ -\frac{r_{k+1}^TAp_k}{p_{k}^TAp_{k}} \ = \ -1 + \alpha_k \frac{\|Ap_k\|^2}{p_{k}^TAp_k} \ = \
                       \\ -1 + \frac{\|r_k\|^2}{p_{k}^TAp_k}
                       \frac{\|Ap_k\|^2}{p_{k}^TAp_k}, $$
so that, since $\beta_k > 0$ and using relation $\|r_k\| \leq
\|p_k\|$, along with $p_k^TAp_k = r_k^TAr_k - \frac{\|r_k\|^4}{\|r_{k-1}\|^4} p_{k-1}^TAp_{k-1}>0$, we have
\begin{equation}
\ \left\{
\begin{array}{l}
                                        \displaystyle \beta_k \geq \max \left\{0, -1 + \frac{\|r_k\|^2}{r_{k}^TAr_k} \frac{\lambda_m(A)}{\kappa (A)} \right\}
                                        \geq \max \left\{0, -1 + \frac{1}{[\kappa (A)]^2} \right\}
                                        = 0  \\
                                        \    \\
                                        \displaystyle \beta_k \leq -1 + \frac{\|p_k\|^2}{p_{k}^TAp_k} \lambda_M(A)\kappa (A)
                                        \leq -1 + [\kappa (A)]^2.
                       \end{array}   \right.   \label{new_bound_beta}
\end{equation}

In particular, this seems to indicate that on those problems where the quantity
$|\gamma_k| \lambda_M(A)$ is reasonably small, $CD$ might be competitive. However, as expected, high values for
$\kappa (A)$ may determine numerical instability for both the CG
and $CD$. In addition, observe that any
conclusion on the comparison between the numerical performance of the CG and $CD$,
depends both on the sequence $\{\gamma_k\}$ and on how tight are the bounds (\ref{sigma_{k-1}}) and
(\ref{new_bound_beta}) for the problem in hand.
\begin{table}
\caption{The $CD$ class for solving the linear system $\bar A \bar y = \bar b$ in (\ref{prec_system}).}
\begin{center}
\fbox{
\begin{tabular}{l}
    \   \\
\multicolumn{1}{c}{\bf The $CD$ class for (\ref{prec_system})}  \\
    \   \\
{\bf Step $0$:} $\quad\!$ Set $k=0$, $ \bar y_0 \in \mathbb{R}^n$, $\bar r_0:= \bar b- \bar A \bar y_0$, $\bar \gamma_0 \in \mathbb{R} \setminus \{0\}$. \\
\hspace*{1cm} $\quad$ If $ \bar r_0 = 0$, then STOP. Else, set $ \bar p_0 := \bar r_0$, \ $k = k+1$.  \\
\hspace*{1cm} $\quad$ Compute $ \bar a_0:=\bar r_0^T{\bar p_0} / \bar p_0^T \bar A \bar p_0$, \\
\hspace*{1cm} $\quad$ $ \bar y_1 :=  \bar y_0 +  \bar a_0  \bar p_0$, \ $ \bar r_1 :=  \bar r_0 -  \bar a_0  \bar A \bar p_0$.  \\
\hspace*{1cm} $\quad$ If $ \bar r_1 = 0$, then STOP. Else, set $ \bar \sigma_0:= \bar \gamma_0 \| \bar A \bar p_0\|^2 / \bar p_0^T \bar A \bar p_0$, \\
\hspace*{1cm} $\quad$ $ \bar p_1 :=  \bar \gamma_0 \bar A \bar p_0 - \bar \sigma_0 \bar p_0$, \  $k = k+1$. \\
{\bf Step $k$:} $\quad\!$ Compute $ \bar a_{k-1}:=  \bar r_{k-1}^T \bar p_{k-1} / \bar p_{k-1}^T \bar A \bar p_{k-1}$, $\bar \gamma_{k-1} \in \mathbb{R}\setminus \{0\}$, \\
\hspace*{1cm} $\quad$ $\bar y_k :=  \bar y_{k-1} +  \bar a_{k-1}  \bar p_{k-1}$, \ $ \bar r_k :=  \bar r_{k-1} -  \bar a_{k-1} \bar A \bar p_{k-1}$. \\
\hspace*{1cm} $\quad$ If $ \bar r_k = 0$, then STOP. Else, set \\
\hspace*{1cm} $\quad$ $ \bar \sigma_{k-1}:= \bar \gamma_{k-1} \frac{\| \bar A \bar p_{k-1}\|^2}{ \bar p_{k-1}^T \bar A \bar p_{k-1}}$, \ $ \bar \omega_{k-1} := \frac{\bar \gamma_{k-1}}{\bar \gamma_{k-2}} \frac{ \bar p_{k-1}^T \bar A \bar p_{k-1}}{ \bar p_{k-2}^T \bar A \bar p_{k-2}}$,\\
\hspace*{1cm} $\quad$ $ \bar p_k :=  \bar \gamma_{k-1} \bar A \bar p_{k-1} -  \bar \sigma_{k-1} \bar p_{k-1} -  \bar \omega_{k-1} \bar p_{k-2}$, \ $k = k+1$. \\
\hspace*{1cm} $\quad$ Go to {\bf Step $k$}.
\end{tabular}
}
\end{center}
\label{tab_prec_CG-2step}
\end{table}
\begin{table}
\caption{The preconditioned $CD$, namely $CD$$_{{\cal M}}$, for solving (\ref{system}).}
\begin{center}
\fbox{
\begin{tabular}{l}
    \   \\
\multicolumn{1}{c}{\bf The $CD$$_{{\cal M}}$ class}  \\
    \   \\
{\bf Step $0$:} $\quad\!$ Set $k=0$, $y_0 \in \mathbb{R}^n$, $r_0:=b-Ay_0$, $\bar \gamma_{0} \in \mathbb{R}\setminus \{0\}$, ${\cal M} \succ 0$. \\
\hspace*{1cm} $\quad$ If $r_0 = 0$, then STOP. Else, set $p_0 := {\cal M}r_0$, \ $k = k+1$.  \\
\hspace*{1cm} $\quad$ Compute $a_0:=r_0^Tp_0/p_0^TAp_0$, \\
\hspace*{1cm} $\quad$ $y_1 := y_0 + a_0 p_0$, \ $r_1 := r_0 - a_0 Ap_0$.  \\
\hspace*{1cm} $\quad$ If $r_1 = 0$, then STOP. Else, set $\sigma_0:= \bar \gamma_{0} \|Ap_0\|_{\cal M}^2/p_0^TAp_0$,\\
\hspace*{1cm} $\quad$ $p_1 := \bar \gamma_{0}{\cal M}(Ap_0) -\sigma_0p_0$, \  $k = k+1$. \\
{\bf Step $k$:} $\quad\!$ Compute $a_{k-1}:= r_{k-1}^Tp_{k-1}/p_{k-1}^TAp_{k-1}$, \ $\bar \gamma_{k-1} \in \mathbb{R} \setminus \{0\}$,  \\
\hspace*{1cm} $\quad$ $y_k := y_{k-1} + a_{k-1} p_{k-1}$, \ $r_k := r_{k-1} - a_{k-1}Ap_{k-1}$. \\
\hspace*{1cm} $\quad$ If $r_k = 0$, then STOP. Else, set \\
\hspace*{1cm} $\quad$ $\sigma_{k-1}:= \bar \gamma_{k-1} \frac{\|Ap_{k-1}\|_{\cal M}^2}{p_{k-1}^TAp_{k-1}}$, \ $ \omega_{k-1}:= \frac{\bar \gamma_{k-1}}{\bar \gamma_{k-2}}\frac{p_{k-1}^TAp_{k-1}}{p_{k-2}^TAp_{k-2}}$, \\\
\hspace*{1cm} $\quad$ $p_k := \bar \gamma_{k-1}{\cal M}(Ap_{k-1}) - \sigma_{k-1}p_{k-1} - \omega_{k-1}p_{k-2}$, \ $k = k+1$. \\
\hspace*{1cm} $\quad$ Go to {\bf Step $k$}.
\end{tabular}
}
\end{center}
\label{tab_prec_M_CG-2step}
\end{table}

\section{The Preconditioned $CD$ Class}
\label{sec:prec_CG_2step} In this section we introduce
preconditioning for the class $CD$, in order to
better cope with possible illconditioning of the matrix $A$ in
(\ref{system}).
\\
Let $M \in \mathbb{R}^{n \times n}$ be {\em nonsingular} and consider the linear system (\ref{system}). Since we have
\begin{eqnarray}
    Ay=b & \Longleftrightarrow & \left(M^T M \right)^{-1} A y =  \left(M^T M \right)^{-1} b  \label{4_stars} \\
         & \Longleftrightarrow & \left(M^{-T} A M^{-1} \right) M y = M^{-T} b  \nonumber \\
         & \Longleftrightarrow & \bar A \bar y = \bar b,   \label{prec_system}
\end{eqnarray}
where
\begin{equation}
        \bar A := M^{-T} A M^{-1}, \qquad \bar y := M y, \qquad \bar b := M^{-T} b,  \label{bar_quantities}
\end{equation}
solving (\ref{system}) is equivalent to solve (\ref{4_stars}) or (\ref{prec_system}). Moreover, any eigenvalue $\lambda_i$, $i=1,\ldots,n$, of $M^{-T}AM^{-1}$ is also an eigenvalue of $\left(M^TM\right)^{-1}A$. Indeed, if $(M^TM)^{-1}A z_i = \lambda_i z_i$, $i=1, \ldots,n$, then
    $$  \left(M^{-1}M^{-T} \right) A M^{-1} \left(M z_i\right) = \lambda_i z_i $$
so that
    $$ M^{-T}A M^{-1} \left(M z_i \right) = \lambda_i \left(Mz_i \right). $$
Now, let us motivate the importance of selecting a promising
matrix $M$ in (\ref{prec_system}), in order to reduce $\kappa
(\bar A)$ (or equivalently to reduce $\kappa [ (M^TM)^{-1} A ]$).
\\
Observe that under the Assumption~\ref{assu_0} and using standard
Chebyshev polynomials analysis, we can prove that in exact algebra
for both the CG and $CD$ the following relation
holds (see \cite{GV89} for details, and a similar analysis holds
for $CD$)
\begin{equation}
    \frac{\|y_k-y^\ast\|_A}{\|y_0-y^\ast\|_A} \leq 2 \left( \frac{\sqrt{\kappa (A)}-1}{\sqrt{\kappa (A)}+1} \right)^k,  \label{Chebyshev}
\end{equation}
where $Ay^\ast = b$. Relation
(\ref{Chebyshev}) reveals the strong dependency of the iterates
generated by the CG and $CD$, on $\kappa (A)$. In
addition, if the CG and $CD$ are used to solve
(\ref{prec_system}) in place of (\ref{system}), then the bound
(\ref{Chebyshev}) becomes
\begin{equation}
    \frac{\|y_k-y^\ast\|_A}{\|y_0-y^\ast\|_A} \leq 2 \left( \frac{\sqrt{\kappa [ (M^TM)^{-1} A ]}-1}{\sqrt{\kappa [ (M^TM )^{-1} A ] }+1} \right)^k,  \label{Chebyshev_prec}
\end{equation}
which definitely encourages to use the {\em
pre\-con\-di\-tio\-ner} $(M^TM)^{-1}$ whenever we have $\kappa [ (M^TM)^{-1} A
] < \kappa (A)$.
\\
On this guideline we want to introduce preconditioning in our
scheme $CD$, for solving the linear system
(\ref{prec_system}), where $M$ is non-singular. We do not expect
that necessarily when $M=I$ (i.e. no preconditioning is considered in
(\ref{prec_system})) $CD$ outperforms the
CG. Indeed, as stated in the previous section, $M=I$ along with
bounds (\ref{sigma_{k-2}}), (\ref{sigma_{k-1}}) and
(\ref{new_bound_beta}) do not suggest a specific preference for $CD$ with respect to the CG. On the
contrary, suppose a suitable preconditioner ${\cal M}= (M^T
M)^{-1}$ is selected when $\kappa (A)$ is large. Then, since the
class $CD$ for suitable values of $\gamma_{k-1}$ at Step $k$ possibly imposes stronger conjugacy conditions
with respect to the CG, it may possibly better recover the
conjugacy loss.
\\
We will soon see that if the preconditioner ${\cal M}$ is adopted
in $CD$, it is just used throughout the computation
of the product ${\cal M} \times v$, $v \in \mathbb{R}^n$, i.e. it is not necessary to
store the possibly dense matrix ${\cal M}$.

The algorithms in $CD$ for (\ref{prec_system}) are described
in Table~\ref{tab_prec_CG-2step}, where each `bar' quantity has a
corresponding quantity in Table~\ref{tab_CG-2step}. Then, after
substituting in Table \ref{tab_prec_CG-2step}
the positions
\begin{equation}
\begin{array}{ccl}
        \bar y_k & := & M y_k  \\
        \bar p_k & := & M p_k  \\
        \bar r_k & := & M^{-T} r_k \\
        {\cal M} & := & \left(M^T M \right)^{-1},
\end{array}  \label{further_pos}
\end{equation}
the vector $\bar p_k$ becomes
    $$\bar p_k = M p_k = \bar \gamma_{k-1} M^{-T} A M^{-1} M p_{k-1} - \bar \sigma_{k-1} M p_{k-1} - \bar \omega_{k-1} M p_{k-2},$$
hence
    $$ p_k = \bar \gamma_{k-1}{\cal M} A p_{k-1} - \bar \sigma_{k-1} p_{k-1} - \bar \omega_{k-1} p_{k-2} $$
with
\begin{eqnarray}
        \bar \sigma_{k-1} & = & \bar \gamma_{k-1}\frac{\|M^{-T} A p_{k-1}\|^2}{p_{k-1}^T A p_{k-1}} = \bar \gamma_{k-1}\frac{(A p_{k-1})^T {\cal M} A p_{k-1}}{p_{k-1}^T A p_{k-1}}  \label{3_stars} \\
                \ & &  \nonumber  \\
        \bar \omega_{k-1} & = & \frac{\bar \gamma_{k-1}}{\bar \gamma_{k-2}}\frac{p_{k-1}^T M^T M^{-T} A M^{-1} M p_{k-1}}{p_{k-2}^T M^T M^{-T} A M^{-1} M p_{k-2}} = \frac{\bar \gamma_{k-1}}{\bar \gamma_{k-2}} \frac{p_{k-1}^T A p_{k-1}}{p_{k-2}^T A p_{k-2}}. \nonumber
\end{eqnarray}
Moreover, relation $\bar r_0 = \bar b - \bar A \bar y_0$ becomes
    $$ M^{-T} r_0 = M^{-T} b - M^{-T} A M^{-1} M y_0 \qquad \Longleftrightarrow \qquad r_0= b-Ay_0, $$
and since $\bar p_0 = M p_0 = \bar r_0 = M^{-T}r_0$ then $p_0={\cal M} r_0$, so that the coefficients $\bar \sigma_0$ and $\bar a_0$ become
\begin{eqnarray}
        \bar \sigma_0 & = & \bar \gamma_{0} \frac{p_0^T M^T M^{-T} A M^{-1} M^{-T} A M^{-1} M p_0}{p_0^T A p_0} = \bar \gamma_{0} \frac{(Ap_0)^T {\cal M} (A p_0)}{p_0^T A p_0} \nonumber   \\
                      & = & \bar \gamma_{0} \frac{\|Ap_0\|_{\cal M}^2}{p_0^T A p_0}   \label{6_stars}  \\
        \bar a_0 & = & \frac{r_0^T M^{-1} M p_0}{p_0^T M^T M^{-T} A M^{-1} M p_0} = \frac{r_0^T p_0}{p_0^T A p_0}. \nonumber
\end{eqnarray}
As regards relation $\bar p_1 = \bar \gamma_{0} \bar A \bar p_0 - \bar \sigma_0 \bar p_0$ we have
    $$ M p_1 = \bar \gamma_{0} M^{-T} A M^{-1} M p_0 - \bar \sigma_0 M p_0, $$
hence
    $$p_1 = \bar \gamma_{0} {\cal M}A p_0 - \bar \sigma_0 p_0.$$
Finally, $\bar r_k = M^{-T} r_k$ so that
    $$ \bar r_k = M^{-T} r_k = M^{-T} r_{k-1} - \bar a_{k-1} M^{-T} A M^{-1} M p_{k-1}$$
and therefore
    $$ r_k = r_{k-1} - \bar a_{k-1} A p_{k-1},$$
with
    $$ \bar a_{k-1} = \frac{r_{k-1}^T M^{-1} M p_{k-1}}{p_{k-1}^T M^T M^{-T} A M^{-1} M p_{k-1}} = \frac{r_{k-1}^Tp_{k-1}}{p_{k-1}^T A p_{k-1}}.$$
The overall resulting preconditioned algorithm $CD$$_{\cal M}$ is detailed in
Table~\ref{tab_prec_M_CG-2step}. Observe that the coefficients
$a_{k-1}$ and $\omega_{k-1}$ in Tables~\ref{tab_CG-2step} and
\ref{tab_prec_M_CG-2step} are invariant under the introduction of
the preconditioner ${\cal M}$. Also note that from (\ref{3_stars})
and (\ref{6_stars}) now in $CD$$_{\cal M}$
the coefficient $\sigma_{k-1}$  depends on $A {\cal M} A$
and not on $A^2$ (as in Table \ref{tab_CG-2step}). \\
Moreover, in Table~\ref{tab_prec_M_CG-2step} the introduction of
the preconditioner simply requires at Step $k$ the additional cost
of the product ${\cal M} \times (Ap_{k-1})$ (similarly to the
preconditioned CG, where at iteration $k$ the additional cost of
preconditioning is given by ${\cal M} \times r_{k-1}$).

Furthermore, in Table~\ref{tab_prec_M_CG-2step} at Step $0$ the
products ${\cal M}r_0$ and ${\cal M}(Ap_0)$ are both required, in
order to compute $\sigma_0$ and $a_0$. Considering that Step
0 of $CD$ is equivalent to two iterations of the CG,
then the cost of preconditioning either CG or $CD$ is
the same. Finally, similar results hold if $CD$$_{\cal M}$ is recast in view of Remark \ref{rem:1}.

\section{Conclusions}
\label{sec:conclusions}
\begin{figure*}
\begin{center}
\includegraphics[width=0.66\textwidth]{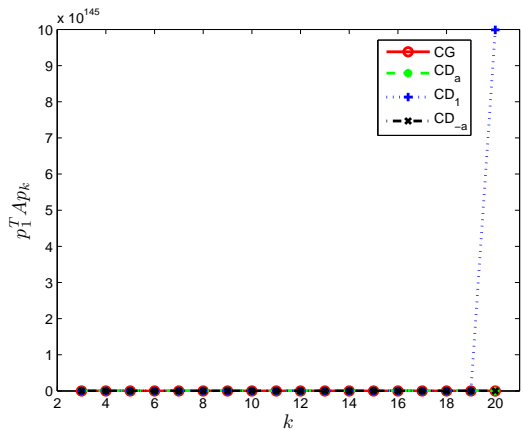}
\end{center}
\caption{Conjugacy loss for an illconditioned problem described by the coefficient matrix $A_{1_0}$ in \cite{MN00}, using the CG, $CD_a$ (the $CD$ class setting $\gamma_0=1$ and $\gamma_k=a_k$, $k\geq 1$), $CD_1$ (the $CD$ class setting $\gamma_k=1$, $k\geq 0$) and $CD_{-a}$ (the $CD$ class setting $\gamma_0=1$ and $\gamma_k=-a_k$, $k\geq 1$). The quantity $p_1^TAp_k$ is reported for $k\geq 3$. As evident, the choice $\gamma_k=1$, $k\geq 0$, can yield very harmful results when the coefficient matrix is illconditioned}
\label{fig:comp_1}       
\end{figure*}
\begin{figure*}
\begin{center}
\includegraphics[width=0.76\textwidth]{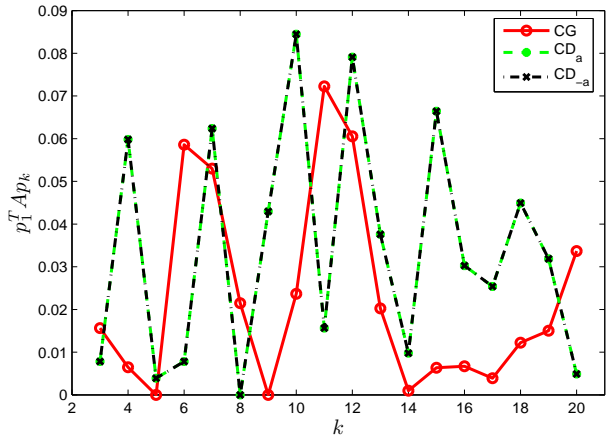}
\end{center}
\caption{Conjugacy loss for an illconditioned problem described by the coefficient matrix $A_{1_0}$ in \cite{MN00}, using only the CG, $CD_a$ (the $CD$ class setting $\gamma_0=1$ and $\gamma_k=a_k$, $k\geq 1$) and $CD_{-a}$ (the $CD$ class setting $\gamma_0=1$ and $\gamma_k=-a_k$, $k\geq 1$). The quantity $p_1^TAp_k$ is reported for $k\geq 3$. The choices $\gamma_k=a_k$ and $\gamma_k=-a_k$ are definitely comparable, and are preferable to the CG for $k \in \{3,6,8,11,20\}$.}
\label{fig:comp_2}       
\end{figure*}
We have investigated a novel class of CG-based iterative methods. This allowed us to recast several properties of the CG within a broad framework of iterative methods, based on generating mutually conjugate directions. Both the analytical properties and the geometric insight where fruitfully exploited, showing that general CG-based methods, including the CG and the scaled-CG, may be introduced. Our  resulting parameter dependent CG-based framework has the distinguishing feature of including conjugacy in a more general fashion, so that numerical results may strongly rely on the choice of a set of parameters. We urge to recall that in principle, since conjugacy can be generalized to the case of $A$ indefinite (see for instance \cite{H80,F05,FaRo07,F05a}) potentially further generalizations with respect to $CD$ can be conceived (allowing the matrix $A$ in (\ref{system}) to be possibly indefinite).
\\
Our study and the present conclusions are not primarily inspired by the aim of possibly beating the performance of the CG on practical cases. On the contrary, we preferred to justify our proposal in the light of a general analysis, which in case (but not necessary) may suggest competitive new iterative algorithms, for solving positive definite linear systems. In a future work we are committed to consider the following couple of issues:
\begin{enumerate}
    \item assessing clear rules for the choice of the sequence $\{\gamma_k\}$ in $CD$;
    \item performing an extensive numerical experience, where different choices of the parameters $\{\gamma_k\}$ in our framework are considered, and practical guidelines for new efficient methods might be investigated.
\end{enumerate}
The theory in Sects.~\ref{sec:further_prop} - \ref{sec:conjugacy_loss} seems to provide yet premature criteria, for a fruitful choice of the sequence $\{\gamma_k\}$ on applications. Furthermore, we do not have clear ideas about the real importance of the scheme $CD$-red in Table \ref{tab_CG-red}, where the choice (\ref{equ:relaz_gamma}) is privileged.
Anyway, to suggest the reader some numerical clues about our proposal, consider that the apparently simplest choice $\gamma_k=1$, $k\geq 0$, proved to be much {\em inefficient} in practice, while the choices $\gamma_k= \pm a_k$ gave appreciable results on different test problems (but still unclear results on larger test sets).

In particular we preliminarily tested the $CD$ class on two (small but) illconditioned problems described in Section 4 of \cite{MN00}. The first problem, whose coefficient matrix is addressed as $A_{1_0} \in \mathbb{R}^{50 \times 50}$, is `{\em obtained from a one-dimensional model, consisting of a line of two-node elements with support conditions at both ends and a linearly varying body force}'. The second  problem has the coefficient matrix $A_{2_0} \in \mathbb{R}^{170 \times 170}$, which is `{\em the stiffness matrix from a two-dimensional
finite element model of a cantilever beam}'.
\\
In Figures \ref{fig:comp_1}-\ref{fig:comp_2} we report the resulting experience on just the first of the two problems (similar results hold for the other one), where the CG is compared with algorithms in the class $CD$, setting $\gamma_k \in \{a_k,1,-a_k\}$. As a partial justification for the reported numerical experience, we note that in the $CD$ class the coefficient $\sigma_k$ depends on the quantity $\|Ap_k\|^2$. Thus, $\|Ap_k\|^2$ may be large when $A$ is illconditioned, so that the choice $\gamma_k=1$ possibly is inadequate to compensate the effect of illconditioning. On the other hand, setting $\gamma_k=\pm a$ and considering the expression of $a_k$, the coefficient $\sigma_k$ is possibly re-scaled, taking into account the condition number of matrix $A$.

Observe that the algorithms in $CD$ are slightly more expensive
than the CG, and they require the storage of one further
vector with respect to the CG. However, we proved for $CD$ some
theoretical properties, which extend those provided by the CG, in order to possibly prevent from conjugacy loss. In addition, when specific values of the parameters in $CD$ are chosen, then we obtain schemes equivalent to both the CG and the scaled-CG.
\\
Furthermore, we have also introduced preconditioning in our proposal, as a possible extension of the preconditioned CG, so that illconditioned linear systems might be possibly more efficiently tackled. Our methods are also aimed to provide an effective tool in optimization contexts where a sequence of conjugate
directions is sought. Truncated Newton methods are just an example of such contexts
from unconstrained nonlinear optimization, as detailed in Sect.~\ref{sec:2b}. We are considering in a further study a numerical experience, over
convex optimization problems, where $CD$ and the relative preconditioned scheme are adopted to solve Newton's equation.
Indeed, in case the matrix $A$ in (\ref{system}) is indefinite, the choices $\gamma_k \in \{a_k , |a_k| ,
-a_k , - |a_k|\}$ are of some interest and might be compared on a significant test set.

In addition, it might be worth also to investigate the choice where the preconditioner ${\cal M}$ in Table \ref{tab_prec_M_CG-2step} is computed by a Quasi-Newton approximation of the inverse matrix $A^{-1}$ (see also \cite{MN00,Gratton:09}), or by using the conjugate directions generated by $CD$, for a suitable choice of the parameters (see also \cite{FaRo13}).

Furthermore, observe that conditions (\ref{two_star}) or (\ref{two_star_alternative}) cannot be further generalized imposing explicitly relations ($\ell \geq 1$)
    $$ p_k^TAp_j=0, \qquad j=k-1,k-2,  \ldots, k-\ell, $$
since (\ref{two_star}) and (\ref{two_star_alternative}) automatically imply $p_k^TAp_j=0$, for any $j\leq k-3$ (see also Lemma \ref{lemma0} and Lemma \ref{lemma2}).

Finally, note that for the minimization of a convex quadratic functional in $\mathbb{R}^n$, the complete relation between the search directions generated by BFGS or L-BFGS updates and the CG was studied (see also \cite{NW00}). Thus, we think that  possible extensions may be considered by replacing the CG with the algorithms in our framework. In this regard, recalling that polarity (see \cite{H80}) plays a keynote role for generating conjugate directions, there is the chance that a possible relation between the BFGS update and $CD$ could spot some light on the role of polarity for Quasi-Newton schemes.



\begin{acknowledgements}
The author is indebted with the anonymous reviewers and the Editor in Chief for their fruitful comments.
\end{acknowledgements}



\end{document}